\newcommand{\R}{\mathbb{R}}
\newtheorem{theorem}{Theorem}[section]
\newtheorem{lemma}[theorem]{Lemma}
\numberwithin{equation}{section}
\begin{document}

\title{The local criteria for blowup of the Dullin-Gottwald-Holm equation and the two-component Dullin-Gottwald-Holm system}
\author{Duc-Trung Hoang\\
Department of Mathematics\\Ecole Normale Sup\'erieure de Lyon, France\\
trung.hoang-duc@ens-lyon.fr}

\maketitle

\abstract{We investigate wave breaking criteria for  the  Dullin-Gottwald-Holm equation and the two-component  Dullin-Gottwald-Holm system. We establish a new blow-up criterion for the general case $\gamma+c_0\alpha^2 \geq 0$ involving local-in-space conditions on the initial data.}


\section{Introduction}
The DGH equation is a nonlinear dispersive equation, modelling the propagation of undirectional shallow waters over a flat bottom. 
It was proposed by  Dullin, Gottwald and Holm \cite{dullin-gottwald-holm} in 2001 and derived from the water wave theory  by using the method of asymptotic analysis and a near identity normal transformation.
The equation reads  
\begin{equation} \label{s1}
\begin{cases} 
u_t - \alpha^2u_{txx} + c_0u_x + 3uu_x +\gamma u_{xxx}= \alpha^2(2u_xu_{xx}+uu_{xxx}) , \quad \quad t > 0, \quad x\in \mathbb{R}, \\ 
u(0,x) = u_0.
\end{cases}
\end{equation}
Here, $u$ stands for a fluid velocity in the $x$ direction and 
$c_0, \alpha^2, \gamma$ are physical parameters: $\alpha^2$ and ${\gamma}/{c}$ are squares of length scales. 
The constant $c_0 = \sqrt{gh}$ is the critical shallow water speed, while $h$ is the mean fluid depth and $g$
is the gravitational constant. In \cite{dullin-gottwald-holm}, the authors proved that the phase speed lies in the band $(-\frac{\gamma}{\alpha^2}, c_0 )$ and longer linear wave are faster
provided that $\gamma + c_0\alpha^2 \geq 0$. 

Let $m = u - \alpha^2u_{xx}$ be the momentum variable. 
Equation~\eqref{s1} can be rewritten in terms of the momentum as
\begin{equation} \label{s2}
\begin{cases}  
m_t + c_0u_x + um_x+ 2mu_x = -\gamma u_{xxx}, \quad \quad t > 0, \quad x\in \mathbb{R}, \\
m(0,x) = m_0.
\end{cases} 
\end{equation}
We denote by 
\[
p(x) = \frac{1}{2\alpha}e^{-|\frac{x}{\alpha}|}
\] 
the Green function for the operator $Q := (1-\alpha^2\partial_x^2)^{-1}$, in a such way that 
$Qf = (1-\alpha^2\partial_x^2)^{-1}f = p*f$ for $f \in L_2(\mathbb{R})$. 
The convolution relation $p*m = u$ allows to recover $u$ from~$m$.

The DGH equation can also be reformulated as a quasi-linear evolution equation of hyperbolic type :
\begin{equation}\label{s5}
\begin{cases}  
u_t + (u-\frac{\gamma}{\alpha^2})u_{x}= -\partial_xp*(\frac{\alpha^2}{2}u_x^2+u^2 + (c_0 + \frac{\gamma}{\alpha^2})u) , \quad \quad t > 0, \quad & x\in \mathbb{R} \\ 
u(x,0) = u_0(x), & x\in \mathbb{R}.
\end{cases} 
\end{equation}

When $\gamma =0$ and $\alpha = 1$  the system (\ref{s1}) boils down  to the Camassa-Holm equation (the dispersionless
Camassa--Holm equation if in addition $c_0=0$), which was derived by Camassa and Holm \cite{camassa-holm} by approximating directly the Hamiltonian for the Euler equations for an irrotational flow in the shallow water regime. 
In the past decades, a considerable number of papers investigated various properties of Camassa-Holm equation such as local well-posedness, blow-up phenomena, persistence properties of solutions, global existence of weak solutions. 

Similarly to the Camassa-Holm equation, equation $\eqref{s2}$ preserves the bi-Ha\-miltonian structure and is completely integrable.
It has solitary wave solutions \cite{dullin-gottwald-holm} as follow:
\[m_t = -B_2\frac{\delta E}{\delta m} = -B_1\frac{\delta F}{\delta m},\]
where
\[B_1 = \delta_x - \alpha^2\delta_x^3.\]
\[B_2 = \delta_x(m+\frac{c_0}{2}+(m+\frac{c_0}{2})\delta_x+\gamma \delta_x^3).\]
Important conservated quantities are:
\[E(u) = \frac{1}{2}\int_{\mathbb{R}}(u^2+\alpha^2u_x^2),\]
\[F(u) = \frac{1}{2}\int_{\mathbb{R}}(u^3+\alpha^3uu_x^2+c_0u^2-\gamma u_x^2).\]

In this paper we will also consider the two-component DGH equation that reads as follows:
\begin{equation}\label{s7}
\begin{cases} 
u_t - \alpha^2u_{txx} + c_0u_x + 3uu_x +\gamma u_{xxx}= \alpha^2(2u_xu_{xx}+uu_{xxx}) - \sigma \rho \rho_x, & t > 0, \quad x\in \mathbb{R}, \\ 
\rho_t+(u\rho)_x = 0  & t > 0, \quad x\in \mathbb{R}, \\
\rho(0,x) = \rho_0, \\
u(0,x) = u_0.
\end{cases}
\end{equation}

As shown by Constantin and Ivanov's \cite{cons-ivan}, system~\eqref{s7} can be derived from the shallow water theory. 
When $\alpha =1$ and $\gamma =0$, we get 
the two-component Camassa-Holm system.

From a geometrical meaning, the two-component DHG system corresponds to a geodesic flow on the semidirect
product Lie group of diffeomorphisms acting on densities, respected to the $H^1$  norm of velocity and the $L^2$ norm of the density. In an hydrodynamical context, 
we consider $\sigma =1$, and the natural boundary conditions are $u \rightarrow 0$ and $\rho \rightarrow 1$ as $x \rightarrow \infty$, for any $t$ . 
Let $\tilde{\rho} = \rho -1$, then $\tilde{\rho} \rightarrow 0$ as $x \rightarrow \infty$. 

In this case, we have:

\begin{equation}\label{s8}
\begin{cases} 
u_t - \alpha^2u_{txx} + c_0u_x + 3uu_x +\gamma u_{xxx}= \alpha^2(2u_xu_{xx}+uu_{xxx}) - \tilde{\rho}\tilde{\rho_x}-\tilde{\rho_x}, & t > 0, \quad x\in \mathbb{R}, \\ 
\tilde{\rho_t}+(u\tilde{\rho})_x+u_x = 0 & t > 0, \quad x\in \mathbb{R}, \\
\tilde{\rho}(0,x) = \tilde{\rho_0}, \\
u(0,x) = u_0.
\end{cases}
\end{equation}
System (\ref{s8}) has two Hamiltonians:

\[E(u) = \frac{1}{2}\int_{\mathbb{R}}(u^2+\alpha^2u_x^2+\rho^2),\]
\[F(u) = \frac{1}{2}\int_{\mathbb{R}}(u^3+\alpha^3uu_x^2+c_0u^2-\gamma u_x^2+2u\rho+u\rho^2).\]

As before,  \eqref{s8} is more conveniently reformulated as
\begin{equation}\label{s9}
\begin{cases}  
u_t + (u-\frac{\gamma}{\alpha^2})u_{x}= -\partial_xp*(\frac{\alpha^2}{2}u_x^2+u^2 + (c_0 + \frac{\gamma}{\alpha^2})u+\frac{1}{2}\tilde{\rho}^2+\tilde{\rho}) , & t > 0, \quad x\in \mathbb{R} \\ 
\tilde{\rho_t}+ u\tilde{\rho}_x  = -u_x\tilde{\rho}- u_x , & t > 0, \quad x\in \mathbb{R} \\
u(x,0) = u_0(x), & x\in \mathbb{R} \\
\tilde{\rho}(0,x) = \tilde{\rho_0}(x) & x\in \mathbb{R}
\end{cases} 
\end{equation}

The present paper adresses the problem of establishing wave breaking criteria for the DGH equation and the two-component DGH system. 
Previous results in this directions were obtained, {\it e.g.\/}, in \cite{zhai-gou-wang} for the two-component DGH system and in \cite{zhou} and \cite{liu} for DGH equation , where the authors dealt with  the special case $\gamma+c_0\alpha^2 = 0$ and $\alpha>0$ obtaining the finite time blowup of solution arising from certain initial profiles.
But the blowup conditions in the above mentioned papers involve  the computation of some global quantities associated with the initial datum (Sobolev norms, or integral conditions, or otherwise sign conditions, or antisymmetry relations, etc.).
Motivated by the recent paper \cite{brandolese}, we would like to establish a ``local-in-space'' blowup criterion, {\it i.e.\/} a criterion involving the properties of the initial datum only in a small neighborhood of a {\it single point\/}.
Such criterion will be more general (and more natural), than earlier blowup results. In addition, our approach will also go through when  
$\gamma+c_0\alpha^2$ is not necessarily zero. 

We will assume $\alpha > 0$. Notice that when $\alpha=0$ the DGH equation reduces to the KdV equation, for which the solutions exist globally and no blowup result can be obtained.
The following theorem represents our main result.

\begin{theorem}\label{theo:main}
Let $T^*$ be the maximal time of the unique solution $u \in C([0,T^*); H^s ) \cap C^1([0,T^*); H^{s-1} )$ of the 
Cauchy problem for the DGH equation~\eqref{s5}, arising from 
an initial datum $u_0 \in H^s(\mathbb{R}) $, with $s>\frac{3}{2}$. 
If there exists $x_0 \in \mathbb{R}$ such that:
\[
u'_0(x_0) < -\textstyle\frac{1}{\alpha}\bigl|u_0(x_0)+\textstyle\frac{1}{2}(c_0+\textstyle\frac{\gamma}{\alpha^2})\bigr|,
\]
then 
$T^*<\infty$.
\end{theorem}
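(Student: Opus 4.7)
The plan is to adapt Brandolese's characteristic-transport method for Camassa--Holm, the main novelty being to absorb the dispersive correction by the shift $\beta := \tfrac12(c_0 + \gamma/\alpha^2)$. I fix the point $x_0$ from the hypothesis, introduce the flow $q(t,x)$ solving $\dot q = (u-\gamma/\alpha^2)(t,q)$ with $q(0,x)=x$, and set $U(t):=u(t,q(t,x_0))$ and $V(t):=u_x(t,q(t,x_0))$. Completing the square $u^2 + 2\beta u = (u+\beta)^2 - \beta^2$ and using $p*1 = 1$ to cancel the resulting constants, a single differentiation of (\ref{s5}) in $x$ combined with $\alpha^2\partial_x^2 p = p-\delta$ produces the ODE system
\[
\dot U = -(\partial_x p * F_+)(q(t)), \qquad \dot V = -\tfrac{V^2}{2} + \tfrac{(U+\beta)^2}{\alpha^2} - \tfrac{1}{\alpha^2}(p*F_+)(q(t)),
\]
with $F_+ := \tfrac{\alpha^2}{2}u_x^2 + (u+\beta)^2 \geq 0$. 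The hypothesis rewrites exactly as $h_\pm(0) < 0$ for the auxiliary variables $h_\pm(t) := V(t) \pm (U(t)+\beta)/\alpha$.

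The crux is a pair of sharp one-sided pointwise estimates. I decompose $p*F_+ = P_- + P_+$ with $P_-(x) := \int_{-\infty}^x p(x-y)F_+(y)\,dy$ and $P_+(x) := \int_x^\infty p(x-y)F_+(y)\,dy$. Expanding the manifestly non-negative integrals $\int_{-\infty}^x e^{-(x-y)/\alpha}(u+\beta - \alpha u_y)^2\,dy$ and $\int_x^\infty e^{-(y-x)/\alpha}(u+\beta + \alpha u_y)^2\,dy$ and integrating by parts the cross term $2u_y(u+\beta) = \partial_y(u+\beta)^2$ yields
\[
P_\pm(x) \geq \tfrac{(u(x)+\beta)^2}{4}.
\]
Combined with $\partial_x p = -\tfrac{\mathrm{sgn}(\cdot)}{\alpha}p$, which gives $(\partial_x p * F_+)(x) = (P_+(x)-P_-(x))/\alpha$, this furnishes two independent pieces of information, one for each sign $\pm$.

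A direct algebraic combination now produces the cooperative differential inequality
\[
\dot h_\pm(t) = -\tfrac{V^2}{2} + \tfrac{(U+\beta)^2}{\alpha^2} - \tfrac{2 P_\pm(q(t))}{\alpha^2} \leq -\tfrac{1}{2}\,h_+(t)\,h_-(t),
\]
from which a first-time-of-escape argument forces $h_\pm(t) < 0$ throughout the lifespan: on any interval where $h_+, h_- < 0$, the right-hand side is strictly negative, so $h_\pm$ is strictly decreasing and cannot return to zero. Conservation of $E(u) = \tfrac12\int(u^2+\alpha^2 u_x^2)$ together with Sobolev embedding bounds $|U(t)+\beta|$ uniformly by some constant $C'$. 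Setting $\psi := -h_+ > 0$ and using $-h_- = \psi + 2(U+\beta)/\alpha \geq \psi - 2C'/\alpha$, the inequality becomes $\dot\psi \geq \tfrac12\psi(\psi - 2C'/\alpha)$; once $\psi$ exceeds $4C'/\alpha$---a threshold reached in finite time thanks to the linear-in-$t$ lower bound $\dot\psi \geq \tfrac12\,\psi(0)\,(-h_-(0)) > 0$---the inequality reduces to the Riccati-type $\dot\psi \geq \psi^2/4$, forcing $\psi \to +\infty$ and hence $V \to -\infty$ in finite time. The standard persistence criterion then gives $T^* < \infty$.

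The main obstacle is establishing the sharp constant $\tfrac14$ in the one-sided bound. Summing the two halves would give only the weaker global estimate $p*F_+ \geq \tfrac{(u+\beta)^2}{2}$, which by itself produces $\dot V \leq -\tfrac12 h_+ h_-$ and propagates the product $h_+ h_-$ but not the individual signs of $h_\pm$: the antisymmetric contribution $\pm\dot U/\alpha$ entering $\dot h_\pm$ must be absorbed by $P_+$ and $P_-$ \emph{separately}. This is why the split of the convolution along $\{y<x\}$ versus $\{y>x\}$ is indispensable, and why the threshold $-\tfrac1\alpha|u_0(x_0)+\beta|$ (rather than some larger multiple) turns out to be the sharp one.
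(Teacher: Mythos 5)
Your proposal is correct and follows essentially the same route as the paper: your one-sided convolution bounds $P_\pm \geq \tfrac14(u+\beta)^2$ are exactly Lemma~\ref{l1} there (with $k=\beta$), and your quantities $h_\pm$ are the paper's $B$ and $-A$. The only cosmetic difference is that you propagate the signs of the unweighted $h_\pm$ via a first-escape-time argument (as in the paper's Appendix) rather than through the exponentially weighted monotone quantities of the main proof, and you close with the energy-conservation/Sobolev Riccati step of the main proof instead of the Appendix's $\sqrt{-AB}$ shortcut.
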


Our second result concerns the two-component DGH equation. It extends the recent blowup result~\cite{LvZhu} on the dispersionless two-component Camassa--Holm equations as well as several results therein quoted.
\begin{theorem}\label{theo:main2}
Suppose that $\gamma = 0$. Let $T^*$ be the maximal time of the unique solution $(u,\tilde{\rho})\in C([0,T^*); H^s \times H^{s-1}) \cap C^1([0,T^*); H^{s-1} \times H^{s-2})$ of the integrable two-component DGH system
~\eqref{s8}, starting from $(u_0,\tilde{\rho_0})\in H^s \times H^{s-1}$ with $s \geq \frac{5}{2}$. If there exist $x_0 \in \mathbb{R}$ such that:

\begin{itemize}
\item[(i)] $\tilde{\rho_0}(x_0) =-1$
\item[(ii)] $u'_0(x_0) < -\textstyle\frac{1}{\alpha}\bigl|u_0(x_0)+\textstyle\frac{1}{2}c_0\bigr|$,
\end{itemize}

then
$T^*<\infty$.

\end{theorem}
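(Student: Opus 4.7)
The plan is to adapt the local-in-space strategy of Theorem~\ref{theo:main} to the two-component setting, exploiting a crucial structural feature of the dispersionless case $\gamma=0$: the two evolutions in \eqref{s9} share a common characteristic velocity. Let $q(t)$ solve $q'(t)=u(t,q(t))$, $q(0)=x_0$, and write $U(t)=u(t,q(t))$, $v(t)=u_x(t,q(t))$, $R(t)=\tilde\rho(t,q(t))$. The second line of \eqref{s9} becomes, along $q$, the linear ODE $(R+1)'(t) = -v(t)(R(t)+1)$; hypothesis~(i) gives $R(0)+1=0$, so by Gronwall $R(t)\equiv -1$ on $[0,T^*)$. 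This freezes the density contribution in the computation that follows.

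Next, I would derive the Riccati-type ODE for $v$. Differentiating the first line of \eqref{s9} in $x$, using $\alpha^{2}\partial_x^{2} p = p-\delta$, and evaluating at $q(t)$ with $R\equiv -1$ and $\gamma=0$, a direct calculation yields
\[
v'(t) \;=\; -\tfrac{1}{2}\,v(t)^{2} + \tfrac{1}{\alpha^{2}}\bigl(U(t)+\tfrac{c_0}{2}\bigr)^{2} - \tfrac{1}{\alpha^{2}}(p*\bar G)(t,q(t)),
\]
where $\bar G := \tfrac{\alpha^{2}}{2}u_x^{2} + \bigl(u+\tfrac{c_0}{2}\bigr)^{2} + \tfrac{1}{2}(\tilde\rho+1)^{2} \geq 0$; the constants produced by completing the square in $u^{2}+c_0 u$ and in $\tfrac{1}{2}\tilde\rho^{2}+\tilde\rho$ cancel between the pointwise quadratic term and $p*\tilde G$. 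Writing $p*\bar G = \mathcal{P}_{+} + \mathcal{P}_{-}$, where $\mathcal{P}_{+}(x)$ is the convolution integral over $\{y<x\}$ and $\mathcal{P}_{-}(x)$ over $\{y>x\}$, a short calculation gives $\partial_x(p*\bar G) = (\mathcal{P}_{-}-\mathcal{P}_{+})/\alpha$, so the transport term of \eqref{s9} yields $U'(t) = (\mathcal{P}_{+}-\mathcal{P}_{-})(t,q(t))/\alpha$.

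The heart of the proof is the pointwise Brandolese-type lower bound
\[
\mathcal{P}_{\pm}(x) \;\geq\; \tfrac{1}{4}\bigl(u(x)+\tfrac{c_0}{2}\bigr)^{2},
\]
which I would establish by differentiating $e^{\pm y/\alpha}\bigl(u+\tfrac{c_0}{2}\bigr)^{2}$ in $y$, bounding the cross term through the AM--GM inequality $2\alpha(u+\tfrac{c_0}{2})u_x \leq \alpha^{2}u_x^{2} + (u+\tfrac{c_0}{2})^{2} \leq 2\bar G$, and integrating from $\mp\infty$ to $x$ (the boundary terms at infinity vanish because $u\in H^s\subset C_0$). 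Introducing the combined quantities $F_{\pm}(t) := v(t) \pm \alpha^{-1}\bigl(U(t)+\tfrac{c_0}{2}\bigr)$ and substituting the two ODEs together with the pointwise bound yields the clean differential system
\[
F_{\pm}'(t) \;\leq\; -\tfrac{1}{2}\,F_{+}(t)\,F_{-}(t).
\]
Hypothesis~(ii) reads exactly $F_{+}(0)<0$ and $F_{-}(0)<0$. As long as both stay negative $F_{+}F_{-}>0$, so both $F_{\pm}$ are strictly decreasing on $[0,T^*)$; the function $W(t):=\max\bigl(F_{+}(t),F_{-}(t)\bigr)$ then satisfies the Dini inequality $W'(t)\leq -W(t)^{2}/2$, which forces $W(t)\to -\infty$ at some time $T\leq 2/|W(0)|$. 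A fortiori $v(t)\to -\infty$, so the standard continuation criterion for~\eqref{s9} (blowup is equivalent to $\|u_x\|_{L^\infty}\to\infty$) gives $T^{*}\leq T<\infty$.

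The main obstacle is pinning down the sharp constant $\tfrac{1}{4}$ in the Brandolese bound: it is precisely this constant that makes the positive contribution $\tfrac{1}{\alpha^{2}}(U+\tfrac{c_0}{2})^{2}$ in $v'(t)$ be compensated by the lower bound on $(p*\bar G)(q)$, leaving the factorisable remainder $-\tfrac{1}{2}F_{+}F_{-}$, and that matches the blowup threshold $\alpha^{-1}\bigl|u_0(x_0)+\tfrac{c_0}{2}\bigr|$ prescribed by~(ii). Any weaker pointwise estimate would prevent closure of the system on $F_{\pm}$ and the scheme would break down.
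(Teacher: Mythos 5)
Your proposal is correct, and its skeleton coincides with the paper's: the same characteristic flow $q_t=u(q)$, the same observation that $\tilde\rho+1$ vanishes for all time along the characteristic through $x_0$ (you derive it from the linear ODE $(R+1)'=-v(R+1)$, the paper from the equivalent identity $(\tilde\rho(t,q)+1)q_x=\tilde\rho_0+1$), the same dropping of the nonnegative density contribution inside the convolution, and the same sharp one-sided convolution bounds $\mathcal{P}_{\pm}\geq\frac{1}{4}(u+\frac{c_0}{2})^2$, which are exactly Lemmas~\ref{l1}--\ref{l2}. Where you diverge is in the closure: the paper's main-text proof of Theorem~\ref{theo:main2} uses the exponentially weighted quantities $A,B$, proves their monotonicity, deduces $g'\leq\frac12 A(0)B(0)<0$, and then needs the conservation of $E(u)$ together with the Sobolev constant of Lemma~\ref{l3} to turn the resulting linear decay of $g$ into a Riccati inequality $g'\leq-\frac14 g^2$ for large times. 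Your route with the unweighted $F_{\pm}=u_x\pm\frac{1}{\alpha}(u+\frac{c_0}{2})$ and the closed system $F_{\pm}'\leq-\frac12 F_+F_-$ is precisely the alternative proof given in the paper's Appendix (where $A=-F_-$, $B=F_+$, and $h=\sqrt{-AB}$ plays the role of your $W=\max(F_+,F_-)$); the paper notes there that this technique also applies to Theorem~\ref{theo:main2}. This variant is self-contained at the ODE level — it requires neither the energy conservation nor the embedding constant — and yields the explicit bound $T^*<2/\sqrt{u_0'(x_0)^2-\frac{1}{\alpha^2}(u_0(x_0)+\frac{c_0}{2})^2}$, at the price of a slightly more delicate continuation argument to keep both $F_{\pm}$ negative (which you handle correctly, as does the paper via the supremum $\omega$).
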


\section{Preliminaries}
Using Kato's theory \cite{kato}, one can establish the local well-posedness theorem for the DGH equation (\ref{s1}) and the two-component 
DGH system (\ref{s7}). For example, the equation (\ref{s5}) can be rewritten as
\begin{equation}
\begin{cases}  
\frac{du}{dt}+A(u) = H(u) \\ 
u(x,0) = u_0(x), \\
\end{cases} 
\end{equation}
with $A(u) = (u+\lambda)u_x$ and $H(u) = -\partial_xp*(\frac{\alpha^2}{2}u_x^2+u^2 + (c_0 + \frac{\gamma}{\alpha^2})u)$. Several proofs of local existence thoery can be found in \cite{con-esc}, \cite{Li-Olver}, \cite{Mis}, \cite{Shk} for the DGH equation, and in \cite{gou-gao-liu}, \cite{zhu-xu} 
for the two-component DGH system. Here we recall the following result:
\begin{theorem}[See \cite{TiGuLi}] 
Given $u_0 \in H^s(\mathbb{R})$, $s > \frac{3}{2}$ and $\gamma+c_0\alpha^2 \geq 0$ of \eqref{s1}. Then there exists $T^* =T(\|u_0\|_{H^s}) > 0$ and a unique solution 
\[u \in C([0,T); H^s ) \cap C^1([0,T); H^{s-1})\]
of equation~\eqref{s5}.
Furthermore, the solution $u$ depends continuously on the initial data $u_0$.
\end{theorem}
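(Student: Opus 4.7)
The natural strategy, as hinted in the preceding paragraph, is to invoke Kato's theory of quasi-linear evolution equations. First I would cast equation~\eqref{s5} in the abstract form
\[
\frac{du}{dt} + A(u)u = F(u),
\]
with $A(u) = (u - \gamma/\alpha^2)\partial_x$ and $F(u) = -\partial_x p * \bigl(\tfrac{\alpha^2}{2}u_x^2 + u^2 + (c_0 + \gamma/\alpha^2)u\bigr)$, choose the spaces $Y = H^s(\mathbb{R})$ and $X = H^{s-1}(\mathbb{R})$, and take the isomorphism $S = \Lambda = (1 - \partial_x^2)^{1/2} \colon Y \to X$. The theorem then reduces to checking Kato's four structural hypotheses for the pair $(A,F)$ on these spaces.

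The four conditions to verify are: \emph{(K1)} for each $u \in Y$, the operator $A(u)$ is quasi-accretive in $X$ with constant depending only on $\|u\|_Y$; \emph{(K2)} the commutator $SA(u)S^{-1} - A(u)$ extends to a bounded operator on $X$ with norm controlled by $\|u\|_Y$; \emph{(K3)} the map $u \mapsto A(u)$ is Lipschitz from $Y$ into $L(Y,X)$ on bounded sets; and \emph{(K4)} $F \colon Y \to Y$ is Lipschitz on bounded sets. Conditions (K1) and (K3) follow from classical transport-operator estimates combined with the Sobolev embedding $H^{s-1} \hookrightarrow L^\infty$, valid for $s > 3/2$, which lets one control $\|u\|_\infty$ and $\|u_x\|_\infty$ by $\|u\|_{H^s}$. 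Condition (K2) reduces to the Kato--Ponce-type commutator estimate $\|[\Lambda^s,u]\partial_x v\|_{L^2} \lesssim \|u\|_{H^s}\|v\|_{H^{s-1}}$. Condition (K4) rests on two observations: convolution with the Helmholtz Green function $p$ regularizes by two derivatives, so $\partial_x p *(\cdot) \colon H^{s-1} \to H^s$ is bounded; and $H^{s-1}$ is a Banach algebra when $s-1 > 1/2$, so the quadratic products $u^2$ and $u_x^2$ are handled by the Sobolev-algebra inequality, while the linear term $(c_0 + \gamma/\alpha^2)u$ is trivial.

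With (K1)--(K4) in hand, Kato's theorem produces a unique local solution $u \in C([0,T^*); H^s) \cap C^1([0,T^*); H^{s-1})$ with $T^* = T(\|u_0\|_{H^s}) > 0$, together with the asserted continuous dependence on the initial datum in the $H^s$-topology. I note that the sign assumption $\gamma + c_0\alpha^2 \geq 0$ is not actually exploited by this scheme; it is stated here because it is the parameter regime of interest throughout the rest of the paper. The main technical obstacle is the commutator estimate (K2) at non-integer $s$, which depends on paradifferential or Coifman--Meyer multilinear techniques; all other hypotheses reduce to routine product and algebra estimates in fractional Sobolev spaces once $s > 3/2$.
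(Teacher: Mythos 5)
Your proposal is correct and follows essentially the same route the paper intends: the paper does not prove this statement but cites \cite{TiGuLi} and \cite{kato}, observing only that \eqref{s5} fits the abstract quasi-linear form $\frac{du}{dt}+A(u)=H(u)$ with $A(u)=(u+\lambda)u_x$, which is exactly the Kato framework you flesh out. The only cosmetic inconsistency is that you take $S=\Lambda:H^s\to H^{s-1}$ but then quote the commutator estimate for $\Lambda^s$ on $L^2$; either choice of scale works, but the estimate should be stated for the operator actually chosen.
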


\begin{theorem}[See \cite{zhu-xu}] 
Given $(u_0,\tilde{\rho_0})\in H^s \times H^{s-1}$, $s \geq \frac{5}{2}$ and $\gamma+c_0\alpha^2 \geq 0$ of \eqref{s8}. \\
Then there exists $T^* =T(\|u_0,\tilde{\rho_0}\|_{H^s\times H^{s-1}}) > 0$ and a unique solution 
\[(u,\tilde{\rho})\in C([0,T^*); H^s \times H^{s-1}) \cap C^1([0,T^*); H^{s-1} \times H^{s-2})\]
of the system~\eqref{s7}.
Furthermore, the solution $(u,\tilde{\rho})$ depends continuously on the initial data $(u_0,\tilde{\rho_0})$.
\end{theorem}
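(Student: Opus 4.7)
The plan is to apply Kato's semigroup theory for quasi-linear evolution equations, exactly as the paper alludes to at the start of Section 2. I would work with the equivalent reformulation~\eqref{s9} and recast it as an abstract Cauchy problem
\[
\frac{dU}{dt} + A(U)U = F(U), \qquad U(0) = U_0,
\]
on the Hilbert space $X = H^{s-1}(\mathbb{R}) \times H^{s-2}(\mathbb{R})$ with dense domain $Y = H^s(\mathbb{R}) \times H^{s-1}(\mathbb{R})$, where $U = (u,\tilde{\rho})^T$,
\[
A(U) \;=\; \begin{pmatrix} (u-\gamma/\alpha^2)\partial_x & 0 \\ 0 & u\partial_x \end{pmatrix},
\]
and $F(U)$ collects the remaining lower-order and convolutional terms, namely $-\partial_x p*\bigl(\tfrac{\alpha^2}{2}u_x^2 + u^2 + (c_0+\gamma/\alpha^2)u + \tfrac12\tilde{\rho}^2 + \tilde{\rho}\bigr)$ in the first slot and $-u_x\tilde{\rho} - u_x$ in the second.

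The first main step is to verify the four Kato conditions on the pair $(X,Y)$. (i) For $U\in Y$, $A(U)\in \mathcal{L}(Y,X)$ follows from the product estimate $\|fg_x\|_{H^{s-1}}\lesssim \|f\|_{H^s}\|g\|_{H^s}$ valid for $s>3/2$, together with its analogue for the second component with $s-1>1/2$, which is exactly where the hypothesis $s\geq 5/2$ is used. (ii) Quasi-m-accretivity of $A(U)$ on $X$: an energy/integration-by-parts computation on the diagonal pieces shows that $\lambda I + A(U)$ is accretive for $\lambda$ large enough depending on $\|u_x\|_{L^\infty}$, and surjectivity of $\lambda I + A(U)$ follows from the standard theory for first-order hyperbolic operators with Lipschitz coefficients. (iii) The commutator condition: set $S = \operatorname{diag}(\Lambda^s,\Lambda^{s-1})$ with $\Lambda=(1-\partial_x^2)^{1/2}$, an isometric isomorphism from $Y$ to $X$, and show $SA(U)S^{-1}-A(U)=B(U)\in\mathcal{L}(X)$ with $\|B(U)\|_{\mathcal{L}(X)}\leq C\|U\|_Y$. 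This reduces to the Kato--Ponce commutator estimate $\|[\Lambda^s,u]\partial_x v\|_{L^2}\lesssim \|u\|_{H^s}\|v\|_{H^s}$ applied separately to each diagonal entry.

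The second main step verifies that $F\colon Y\to Y$ is bounded and Lipschitz on bounded sets. Since $\partial_x Q$ gains one net derivative, the convolutional terms $u^2$, $u_x^2$, $\tilde{\rho}^2$, $\tilde{\rho}$ are handled by the algebra property of $H^{s-1}$ (valid because $s-1>1/2$); the linear pieces are straightforward. For the $\tilde{\rho}$-component, $-u_x\tilde{\rho}-u_x$ is controlled in $H^{s-1}$ by the same algebra property and the trivial bound $\|u_x\|_{H^{s-1}}\leq \|u\|_{H^s}$. Having verified Kato's hypotheses, the abstract theorem immediately produces a unique maximal solution $U \in C([0,T^*);Y)\cap C^1([0,T^*);X)$, with the existence time $T^*$ bounded below by a decreasing function of $\|U_0\|_Y$ and with continuous dependence on $U_0$.

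The main obstacle is the bookkeeping of the coupled commutator and product estimates across two components with different Sobolev regularities $s$ and $s-1$, together with the coupling through the off-diagonal nonlinearity $-u_x\tilde{\rho}$ in $F$; the requirement $s\geq 5/2$ arises precisely to keep $H^{s-1}$ an algebra so that both the $\tilde{\rho}$-transport equation and the commutator estimates close simultaneously. The hypothesis $\gamma+c_0\alpha^2\geq 0$ plays no role in the local well-posedness step itself and is only needed for the subsequent blow-up analysis.
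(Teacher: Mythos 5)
This theorem is quoted from \cite{zhu-xu} and is not proved in the paper, which only remarks that it follows from Kato's quasi-linear semigroup theory; your outline implements exactly that approach (diagonal transport operator on $H^s\times H^{s-1}$, accretivity, Kato--Ponce commutators, Lipschitz source term, with $\gamma+c_0\alpha^2\geq 0$ indeed irrelevant at this stage), and it is the standard, correct argument. One bookkeeping fix: with $X=H^{s-1}\times H^{s-2}$ the isomorphism $S\colon Y\to X$ should be $\mathrm{diag}(\Lambda,\Lambda)$ rather than $\mathrm{diag}(\Lambda^{s},\Lambda^{s-1})$, since the latter maps $Y$ onto $L^2\times L^2$ (the other standard choice of $X$); either convention works once made consistent.
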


The maximal time of existence $T^*$ is known to be independent of the parameter $s$.
Moreover, if $T^*$ is finite then $\lim_{t\rightarrow T}\|u(t)\|_{H^s} = \infty$. Next theorem tells something more about 
such blowup (or wave-breaking) scenario.

\begin{theorem}[See \cite{TiGuLi}] 
Given $u_0 \in H^s(\mathbb{R})$, $s > \frac{3}{2}$. Then the solution $u(t,x)$ of the DGH equation is uniformly bounded on [0,T). Moreover, a blowup occurs at the time $T < \infty$ if and only if 
\[\lim_{t \rightarrow T}\inf(\inf_{x \in \mathbb{R}}(u_x(t,x)))= -\infty\]
\end{theorem}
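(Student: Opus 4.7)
The plan is to handle the two assertions in turn. For the uniform boundedness of $u$, I would first verify that the Hamiltonian $E(u)=\tfrac{1}{2}\int_{\mathbb{R}}(u^2+\alpha^2 u_x^2)\,dx$ is conserved by classical solutions of \eqref{s5}. Differentiating $E$ in time, substituting from \eqref{s5}, and integrating by parts makes every term vanish: the convective term by symmetry, and the non-local term because $\partial_x p\ast$ is skew-adjoint on $L^2$. This yields a time-independent bound on $\int(u^2+\alpha^2 u_x^2)$, and the one-dimensional Sobolev embedding $H^1\hookrightarrow L^\infty$ gives $\|u(t)\|_{L^\infty}\leq C$ uniformly on $[0,T)$.

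For the blowup characterization, the direction $(\Leftarrow)$ is immediate: since $s>3/2$, $H^s\hookrightarrow W^{1,\infty}$, so $\|u_x(t)\|_{L^\infty}\to\infty$ forces $\|u(t)\|_{H^s}\to\infty$. The substantive direction is $(\Rightarrow)$. Assume for contradiction that $\inf_{x} u_x(t,x)\geq -M$ uniformly on $[0,T)$ with $T<\infty$. The first task is to produce a matching upper bound on $u_x$. Differentiating \eqref{s5} in $x$ and exploiting the algebraic identity $\alpha^2\partial_x^2 p=p-\delta$ to eliminate the non-local second derivative gives the pointwise transport equation
\[
u_{xt}+\bigl(u-\tfrac{\gamma}{\alpha^2}\bigr)u_{xx}=-\tfrac12\, u_x^2 + R(t,x),
\]
where $R$ collects linear and quadratic terms in $u$ together with a convolution $p\ast\bigl(\tfrac{\alpha^2}{2}u_x^2+u^2+(c_0+\gamma/\alpha^2)u\bigr)$. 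Since $p\in L^1\cap L^\infty$, $\|u\|_{L^\infty}$ is already controlled, and $\|u_x\|_{L^2}$ is conserved, Young's inequality bounds $R$ uniformly in $L^\infty$. Reading this equation along the characteristics $\dot q(t,x_0)=u(t,q)-\gamma/\alpha^2$, the scalar $v(t):=u_x(t,q(t,x_0))$ satisfies $\dot v\leq -\tfrac12 v^2+C$; comparison with the ODE $\dot w=-\tfrac12 w^2+C$ yields $v(t)\leq \max\{v(0),\sqrt{2C}\}$. Since the characteristic flow is a diffeomorphism of $\mathbb{R}$ for each $t$, $\sup_x u_x$ is uniformly bounded, and combined with the lower bound this gives $\|u_x\|_{L^\infty([0,T)\times\mathbb{R})}\leq K$.

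With $\|u_x\|_{L^\infty}$ under control, a standard $H^s$ energy estimate closes the argument. Applying $\Lambda^s:=(1-\partial_x^2)^{s/2}$ to \eqref{s5}, pairing with $\Lambda^s u$ in $L^2$, and invoking Kato--Ponce commutator estimates together with the boundedness of $\partial_x p$ as a Fourier multiplier on $H^{s-1}$, one obtains
\[
\frac{d}{dt}\|u(t)\|_{H^s}^2\leq C\bigl(1+\|u_x(t)\|_{L^\infty}\bigr)\|u(t)\|_{H^s}^2.
\]
Gronwall's inequality then forces $\|u(t)\|_{H^s}\leq \|u_0\|_{H^s}\,e^{CT}<\infty$ on $[0,T)$, contradicting the blowup assumption.

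The main obstacle is the first step of $(\Rightarrow)$: converting the hypothetical lower bound on $u_x$ into an upper bound. This hinges on the fortunate sign of the quadratic contribution after the identity $\alpha^2\partial_x^2 p=p-\delta$ is used to cancel one half of $-u_x^2$ with the local part of the non-local term, leaving the damping remainder $-\tfrac12 u_x^2$. Without this algebraic cancellation the quadratic term would not be dissipative and the ODE comparison would fail; verifying that $R$ is genuinely bounded in $L^\infty$ also requires care, since it mixes local and non-local quadratic quantities whose boundedness ultimately rests on the conservation of $E$.
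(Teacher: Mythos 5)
The paper does not prove this theorem: it is quoted verbatim from the reference [TiGuLi] (Tian--Gui--Liu), so there is no in-paper proof to compare against. Your argument is, however, essentially the standard one used in that literature and it is sound in outline: conservation of $E(u)=\tfrac12\int(u^2+\alpha^2u_x^2)$ plus the embedding $H^1_\alpha\hookrightarrow L^\infty$ gives the uniform bound on $u$; the direction $(\Leftarrow)$ is immediate from $H^s\hookrightarrow W^{1,\infty}$; and for $(\Rightarrow)$ the key points are exactly the ones you identify, namely that the identity $\alpha^2\partial_x^2p=p-\delta$ turns the differentiated equation into a Riccati-type transport equation with coefficient $-\tfrac12 u_x^2$ and a remainder $R$ that is bounded in $L^\infty$ purely in terms of the conserved energy (note $p\ast u$ is handled by Cauchy--Schwarz since $p\in L^2$, not by Young with $u\in L^1$), so that an \emph{unconditional} upper bound on $u_x$ follows along characteristics, and the assumed lower bound then closes the Kato--Ponce/Gronwall estimate. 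Two places deserve more care than you give them. First, the conservation of $E$ is cleanest in the momentum form $m_t+c_0u_x+um_x+2mu_x=-\gamma u_{xxx}$, where $\tfrac{d}{dt}E=\int u\,m_t$ and each term integrates to zero; the one-line appeal to skew-adjointness of $\partial_xp\ast$ in the non-local form does not by itself kill the terms coming from $\alpha^2\int u_xu_{xt}$. Second, the commutator/Moser estimates you invoke do hold for all $s>\tfrac32$ (since $s-1>0$), but you should say explicitly that you use the Moser product estimate $\|fg\|_{H^{s-1}}\lesssim\|f\|_{L^\infty}\|g\|_{H^{s-1}}+\|g\|_{L^\infty}\|f\|_{H^{s-1}}$ on $u_x^2$ after noting that $\partial_x(1-\alpha^2\partial_x^2)^{-1}$ maps $H^{s-1}$ to $H^s$; otherwise the reader cannot check that only $\|u_x\|_{L^\infty}$, and not $\|u_{xx}\|_{L^\infty}$, enters the Gronwall coefficient. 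With these points made precise the proof is complete.
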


\begin{theorem}[See \cite{zhu-xu}] 
Given $(u_0,\tilde{\rho_0})\in H^s \times H^{s-1}$, $s \geq \frac{5}{2}$. Then the solution $u,\tilde{\rho}$ of the two-component DGH equation is uniformly bounded on [0,T). 
Moreover, a blowup occurs at the time $T < \infty$ if and only if 
\[\lim_{t \rightarrow T}\inf(\inf_{x \in \mathbb{R}}(u_x(t,x)))= -\infty\]
\end{theorem}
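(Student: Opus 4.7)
My plan is to treat the two separate assertions in the statement — uniform-in-time control of a low-regularity norm, and the $u_x$-based blow-up dichotomy — by the standard Camassa--Holm-type machinery: the conservation of the Hamiltonian energy plus an $H^s\times H^{s-1}$ energy estimate whose right-hand side is dominated by $\|u\|_{W^{1,\infty}}+\|\tilde\rho\|_{L^\infty}$.

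For the first assertion, I would use (the renormalized form of) the conserved energy $E=\tfrac12\int(u^2+\alpha^2 u_x^2+\tilde\rho^2)$, whose conservation can be checked directly from \eqref{s9}. This provides an a priori bound $\|u(t)\|_{H^1}\le C$ and $\|\tilde\rho(t)\|_{L^2}\le C$ on $[0,T)$, and, via the Sobolev embedding $H^1(\mathbb{R})\hookrightarrow L^\infty(\mathbb{R})$, an $L^\infty$ bound on $u$ that is independent of $t$.

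For the blow-up characterization the easy direction is the forward one: if $\|u\|_{H^s}$ stays bounded on $[0,T)$ then, since $s\ge 5/2>3/2$, the embedding $H^s\hookrightarrow W^{1,\infty}$ gives a uniform lower bound on $u_x$, contradicting $\liminf_{t\to T}\inf_x u_x=-\infty$. For the converse I would argue by contradiction: suppose $\inf_{[0,T)\times\mathbb R}u_x\ge -M$. First, integrating the second equation of \eqref{s9} along the characteristics $\dot q(t,x)=u(t,q(t,x))$ yields
\begin{equation*}
(\tilde\rho+1)(t,q(t,x))=(\tilde\rho_0(x)+1)\exp\!\Bigl(-\int_0^t u_x(s,q(s,x))\,ds\Bigr),
\end{equation*}
so $\|\tilde\rho(t)\|_{L^\infty}\le (\|\tilde\rho_0\|_{L^\infty}+1)e^{Mt}+1$ remains finite on $[0,T)$. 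Next, applying $\Lambda^s:=(1-\partial_x^2)^{s/2}$ to the first equation of \eqref{s9}, $\Lambda^{s-1}$ to the second, multiplying by $\Lambda^s u$ and $\Lambda^{s-1}\tilde\rho$ respectively, and integrating, I would use the Kato--Ponce commutator estimate to absorb the transport terms $(u-\gamma/\alpha^2)u_x$ and $u\tilde\rho_x$, the smoothing properties of the convolution kernel $p$ to treat the nonlocal right-hand side, and a straightforward product/Moser estimate for the forcing terms in $\tilde\rho$. The outcome should be the differential inequality
\begin{equation*}
\frac{d}{dt}\bigl(\|u\|_{H^s}^2+\|\tilde\rho\|_{H^{s-1}}^2\bigr)\le C\bigl(1+\|u\|_{W^{1,\infty}}+\|\tilde\rho\|_{L^\infty}\bigr)\bigl(\|u\|_{H^s}^2+\|\tilde\rho\|_{H^{s-1}}^2\bigr).
\end{equation*}
Since the $W^{1,\infty}$ norm of $u$ is bounded by $\|u\|_{L^\infty}+M$ and we have just controlled $\|\tilde\rho\|_{L^\infty}$, Gronwall's lemma then keeps the $H^s\times H^{s-1}$ norm finite up to $T$, contradicting the maximality of $T^*$ via the blow-up alternative for the local existence theorem.

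The step I expect to be technically delicate is the energy estimate in the coupled system: the commutator $[\Lambda^{s-1},u]\partial_x\tilde\rho$ has to be controlled in $L^2$ only in terms of $\|u_x\|_{L^\infty}\|\tilde\rho\|_{H^{s-1}}$ and $\|u\|_{H^s}\|\tilde\rho\|_{L^\infty}$, which is exactly the form required to close the Gronwall argument; and the inhomogeneous $-u_x$ source in the $\tilde\rho$-equation must be kept at the level of $\|u\|_{H^s}$ (not $\|u\|_{H^{s-1}}$) to match regularities. Everything else — the transport identity for $\tilde\rho+1$ and the $H^1$ conservation of $u$ — is a routine consequence of the structure already laid out in \eqref{s8}--\eqref{s9}.
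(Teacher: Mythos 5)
This statement is quoted from \cite{zhu-xu} and the paper gives no proof of it, so there is nothing internal to compare against; I can only assess your sketch on its own terms. The overall architecture (conserved energy for the uniform bound, forward direction via $H^s\hookrightarrow W^{1,\infty}$, contrapositive of the hard direction via an $H^s\times H^{s-1}$ energy estimate plus Gronwall, and the transport identity giving $L^\infty$ control of $\tilde\rho$ from a one-sided bound on $u_x$) is the standard and correct strategy, and the characteristics formula for $\tilde\rho+1$ is right.

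There is, however, one genuine gap, and it sits at the crux of the theorem. Your Kato--Ponce energy inequality carries the factor $\|u\|_{W^{1,\infty}}$, i.e.\ the \emph{full} sup norm of $u_x$, and you then assert that this is ``bounded by $\|u\|_{L^\infty}+M$'' under the hypothesis $\inf_{x}u_x\ge -M$. That implication is false as stated: a lower bound on $u_x$ says nothing about its positive part, and the whole point of the wave-breaking dichotomy is precisely that only the negative part of $u_x$ should matter. To close the argument you must either (i) prove separately that $\sup_x u_x(t,x)$ is a priori bounded, which does hold here but requires an extra step --- along the characteristics one has the Riccati-type inequality $\frac{d}{dt}u_x\le -\frac12 u_x^2+K$ with $K$ controlled by the conserved $H^1_\alpha\times L^2$ energy through the convolution with $p$, whence $\sup_x u_x(t,\cdot)\le\max\{\sup_x u_0',\sqrt{2K}\}$; or (ii) restructure the energy estimate at an integer regularity level, e.g.\ differentiating $\int(m^2+\tilde\rho_x^2+\tilde\rho^2)$ with $m=u-\alpha^2u_{xx}$, where the dangerous terms appear in the form $\int u_x\cdot(\text{nonnegative density})$ and can be bounded using only $\inf_x u_x\ge -M$, and then invoke the independence of $T^*$ from $s$ to cover all $s\ge \frac52$. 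Without one of these two ingredients the Gronwall step does not close, so as written the proof of the hard direction fails at exactly the point you flagged as routine rather than at the commutator estimates you flagged as delicate.
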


Following McKean's approach for the Camassa-Holm equation \cite{cons-moli}, we introduce 
the particle trajectory $q(t,x) \in C^1([0,T) \times \mathbb{R}, \mathbb{R})$, defined by

\begin{equation}\label{s6}
\begin{cases}  
q_t(t,x) = u(t,q(t,x)) - \frac{\gamma}{\alpha^2}, & t \in [0,T^*) \\ 
q(0,x) = x. \\
\end{cases}
\end{equation}
For every fixed $t \in [0,T)$, $q(t,.)$ is an increasing diffeomorphism of the real line. 
In fact, taking the derivative with respect to~$x$, yields
\[\frac{dq_t}{dx}=q_{xt}=u_x(t,q(t,x))q_x.\]  
Then
\[q_x(x,t) = \exp{\int_{0}^{t}u_x(q,s)}ds.\]

The momentum $m $ satisfies the fundamental identity,
\[m_0(x) + \frac{c_0}{2} + \frac{\gamma}{2\alpha^2} = \Bigl( m(t,q(t,x)) + \frac{c_0}{2} + \frac{\gamma}{2\alpha^2}\Bigr)q_x^2(t,x),\]
putting in evidence the specific properties of the momentum in the case ${c_0} + \frac{\gamma}{\alpha^2}=0$.

\section{Convolution estimates}
First of all, we state some useful results that will be used in the proof of  Theorem~\ref{theo:main}.
For convenience, let us set
\[\lambda = -\frac{\gamma}{\alpha^2}\qquad \mbox{and}\qquad 
k = \frac{1}{2}(c_0+\frac{\gamma}{\alpha^2}). 
\]
The following lemma generalizes the estimates in~\cite{brandolese}:
\begin{lemma}\label{l1}
With the above notations, we have the following inequalities, for all $u\in H^s(\R)$,
\begin{equation} \label{s3}
(p-\alpha \partial_xp)*\Bigl(\frac{\alpha^2}{2}u_x^2+ u^2 + 2ku\Bigr) \geq \frac{(u+k)^2}{2}-k^2
\end{equation}
and
\begin{equation} \label{s4}
(p+\alpha \partial_xp)*\Bigl(\frac{\alpha^2}{2}u_x^2+ u^2 + 2ku\Bigr) \geq \frac{(u+k)^2}{2}-k^2
\end{equation}
\end{lemma}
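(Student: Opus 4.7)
The plan is to reduce both inequalities to a one-sided exponential integral identity and then close via integration by parts together with an elementary AM--GM inequality.

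First I would explicitly split the two kernels. Since $p(x)=\tfrac{1}{2\alpha}e^{-|x|/\alpha}$, one computes $\alpha\, p'(x)=-\mathrm{sgn}(x)\, p(x)$, so that
\[
(p-\alpha\partial_x p)(x)=\tfrac{1}{\alpha}e^{-x/\alpha}\mathbf{1}_{\{x\ge 0\}},\qquad (p+\alpha\partial_x p)(x)=\tfrac{1}{\alpha}e^{x/\alpha}\mathbf{1}_{\{x\le 0\}}.
\]
Thus for any test point $x_0\in\R$,
\[
\bigl((p-\alpha\partial_x p)*f\bigr)(x_0)=\int_{-\infty}^{x_0}\tfrac{1}{\alpha}e^{-(x_0-y)/\alpha}f(y)\,dy,
\]
and similarly $(p+\alpha\partial_x p)*f$ is an integral on $[x_0,+\infty)$. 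Because each of these kernels has total integral $1$, I can absorb the constant $k$: rewriting $u^2+2ku=(u+k)^2-k^2$ reduces \eqref{s3} (and, symmetrically, \eqref{s4}) to the single claim
\[
\int_{-\infty}^{x_0}\tfrac{1}{\alpha}e^{-(x_0-y)/\alpha}\Bigl(\tfrac{\alpha^2}{2}h'(y)^2+h(y)^2\Bigr)dy \;\ge\; \tfrac{1}{2}\,h(x_0)^2,
\]
where $h:=u+k$ (so $h'=u_x$).

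Next I would prove this claim by an exact integration by parts followed by AM--GM. Starting from
\[
\tfrac{d}{dy}\!\left[e^{-(x_0-y)/\alpha}h(y)^2\right]=\tfrac{1}{\alpha}e^{-(x_0-y)/\alpha}h^2+2e^{-(x_0-y)/\alpha}h\,h',
\]
I integrate from $-\infty$ to $x_0$; the boundary term at $-\infty$ vanishes because $u\in H^s(\R)$ with $s>3/2$ implies $h$ is bounded, while the exponential weight kills it. This yields the identity
\[
h(x_0)^2=\int_{-\infty}^{x_0}\tfrac{1}{\alpha}e^{-(x_0-y)/\alpha}h^2\,dy+\int_{-\infty}^{x_0}2e^{-(x_0-y)/\alpha}h\,h'\,dy.
\]
Now I apply the pointwise Cauchy--Schwarz estimate $2|hh'|\le \tfrac{1}{\alpha}h^2+\alpha (h')^2$ to the second integrand and obtain
\[
h(x_0)^2\le \int_{-\infty}^{x_0}\tfrac{2}{\alpha}e^{-(x_0-y)/\alpha}h^2\,dy+\int_{-\infty}^{x_0}\alpha\,e^{-(x_0-y)/\alpha}(h')^2\,dy=2\int_{-\infty}^{x_0}\tfrac{1}{\alpha}e^{-(x_0-y)/\alpha}\Bigl(h^2+\tfrac{\alpha^2}{2}(h')^2\Bigr)dy.
\]
Dividing by $2$ gives the desired inequality, and subtracting $k^2$ restores \eqref{s3}. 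Estimate \eqref{s4} is obtained by the completely symmetric argument, differentiating $e^{(x_0-y)/\alpha}h^2$ and integrating from $x_0$ to $+\infty$.

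I do not expect serious obstacles: the single place requiring a little care is the vanishing of the boundary term at infinity, which uses only the $H^s$--decay of $u$ (recall $s>3/2$), and the absorption of the constant $k$, which is just the observation that the two split kernels are probability densities on the appropriate half-line. The rest is a one-line AM--GM. This is essentially the Camassa--Holm estimate of \cite{brandolese}, rescaled by $\alpha$ and shifted by $k$ to accommodate the dispersion parameters $c_0$ and $\gamma$.
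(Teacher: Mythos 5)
Your proposal is correct and follows essentially the same route as the paper: the same identification of $p\mp\alpha\partial_xp$ with one-sided exponential kernels of unit mass, the same completion of the square $u^2+2ku=(u+k)^2-k^2$, and the same combination of integration by parts with the pointwise inequality $2\alpha|(u+k)u_x|\le \alpha^2u_x^2+(u+k)^2$ (you merely apply these two steps in the opposite order). No gaps.
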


The above inequalities are sharp. The equality holds with the choice $u= ce^{-\frac{|x-y|}{\alpha}} - k$, 
with $c,y \in \mathbb{R}$.

\begin{proof}

We denote by $1_{\mathbb{R}^+}$ and $1_{\mathbb{R}^-}$ the characteristic functions  of 
$\mathbb{R}^+$ and $\mathbb{R}^-$ respectively. Hence:

\begin{eqnarray}
(p-\alpha \partial_xp)*(\frac{\alpha^2}{2}u_x^2+ u^2 + 2ku) &=& (p + \mbox{sign(x)}p)*(\frac{\alpha^2}{2}u_x^2+ u^2 + 2ku) \nonumber \\
&=& 2p1_{\mathbb{R}^+}*(\frac{\alpha^2}{2}u_x^2+ u^2 + 2ku) \nonumber \\
&=& \frac{1}{\alpha}\int_{-\infty}^{x}e^{\frac{y-x}{\alpha}}(\frac{\alpha^2}{2}u_x^2+ u^2 + 2ku)(y)dy \nonumber \\
&=& \frac{1}{\alpha}\int_{-\infty}^{x}e^{\frac{y-x}{\alpha}}(\frac{\alpha^2}{2}u_x^2+ (u+k)^2)(y)dy - \frac{k^2}{\alpha}\int_{-\infty}^{x}e^{\frac{y-x}{\alpha}}dy   \nonumber \\
&=&  \frac{1}{\alpha}\int_{-\infty}^{x}e^{\frac{y-x}{\alpha}}(\frac{\alpha^2}{2}u_x^2+ (u+k)^2)(y)dy - k^2. \nonumber 
\end{eqnarray}

Using Cauchy inequality, we have: 
\[
\begin{split}
\int_{-\infty}^{x}e^{\frac{y}{\alpha}}(\alpha^2 u_x^2+ (u+k)^2)(y)dy
&\geq 2\alpha  \int_{-\infty}^{x}e^{\frac{y}{\alpha}}(u+k)u_x(y)dy \\
&= \alpha e^{\frac{x}{\alpha}}(u+k)^2 -  \int_{-\infty}^{x}e^{\frac{y}{\alpha}}(u+k)^2(y)dy
\end{split}
\]

It follows that 
\[ \frac{1}{\alpha}\int_{-\infty}^{x}e^{\frac{y-x}{\alpha}}(\frac{\alpha^2}{2}u_x^2+ (u+k)^2)(y)dy \geq \frac{(u+k)^2}{2} \]

So, (\ref{s3}) is obtained. Similarly, one proves \eqref{s4}.

\end{proof}

\begin{lemma}\label{l2}
With the above definitions, we have the following inequality:
\begin{equation} \label{s7}
p*(\frac{\alpha^2}{2}u_x^2+ (u+k)^2) \geq \frac{(u+k)^2}{2}
\end{equation}
\end{lemma}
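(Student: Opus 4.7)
The plan is to deduce Lemma~\ref{l2} directly from Lemma~\ref{l1}, without any further convolution computation. The key observation is the trivial identity
\[
p = \tfrac12\bigl[(p-\alpha\partial_x p)+(p+\alpha\partial_x p)\bigr],
\]
so averaging the two inequalities \eqref{s3} and \eqref{s4} proven in Lemma~\ref{l1} immediately yields
\[
p*\Bigl(\tfrac{\alpha^2}{2}u_x^2 + u^2 + 2ku\Bigr) \;\geq\; \tfrac{(u+k)^2}{2} - k^2.
\]

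Next I would massage the left-hand side into the form appearing in~\eqref{s7}. Since $(u+k)^2 = u^2+2ku+k^2$, we can write
\[
p*\Bigl(\tfrac{\alpha^2}{2}u_x^2 + (u+k)^2\Bigr) \;=\; p*\Bigl(\tfrac{\alpha^2}{2}u_x^2 + u^2 + 2ku\Bigr) + k^2\,(p*1).
\]
Using the normalization $\int_{\mathbb{R}} p(x)\,dx = 1$ of the Green function, the constant $p*1$ equals $1$, so the extra $k^2$ exactly cancels the $-k^2$ in the averaged inequality, giving the announced bound $\tfrac{(u+k)^2}{2}$.

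I do not expect any substantive obstacle: the argument is just a linear combination of results already in hand, plus the fact that $p$ is a probability density. The only minor point to keep in mind is that for $u\in H^s$ with $s>\tfrac32$ all convolutions are pointwise well-defined, and $p*1$ is to be understood as the constant function $1$ (it is not an $L^p$ convolution but a convolution of tempered distributions, which is harmless here).
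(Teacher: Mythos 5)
Your argument is correct and is essentially the paper's own proof in a slightly different dress: the averaging identity $p=\tfrac12\bigl[(p-\alpha\partial_xp)+(p+\alpha\partial_xp)\bigr]$ is exactly the paper's decomposition $p=p1_{\mathbb{R}^+}+p1_{\mathbb{R}^-}$, and both proofs then invoke Lemma~\ref{l1} on each half. The only cosmetic difference is that you use the stated inequalities \eqref{s3}--\eqref{s4} together with the normalization $p*1=1$ to handle the $k^2$ bookkeeping, whereas the paper applies the intermediate estimate from the proof of Lemma~\ref{l1} directly to $\frac{\alpha^2}{2}u_x^2+(u+k)^2$.
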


\begin{proof}
Observing that 
\[ p*(\frac{\alpha^2}{2}u_x^2+ (u+k)^2) = p1_{\mathbb{R}^+}*(\frac{\alpha^2}{2}u_x^2+ (u+k)^2) + p1_{\mathbb{R}^-}*(\frac{\alpha^2}{2}u_x^2+ (u+k)^2) \]

Following the lemma 3.1, we have:
\[p1_{\mathbb{R}^+}*(\frac{\alpha^2}{2}u_x^2+ (u+k)^2) \geq \frac{(u+k)^2}{4}\]
and
\[p1_{\mathbb{R}^-}*(\frac{\alpha^2}{2}u_x^2+ (u+k)^2) \geq \frac{(u+k)^2}{4}\]
The result is obtained

\end{proof}

Now, for any $f \in H^1(\mathbb{R})$, we have the obvious inequality:
\[ \min\{\alpha^2,1\}\|f\|_{H^1}^2 \leq \int_{\mathbb{R}}(f^2+\alpha^2f_x^2)dx \leq \max\{\alpha^2,1\}\|f\|_{H^1}^2\]

If we define:
\[\|f\|_{H_{\alpha}^1}^2 = \int_{\mathbb{R}}(f^2+\alpha^2f_x^2)dx, \]
then $H_{\alpha}^1 \subset L^{\infty}$. The next lemma  estimates the Sobolev costant related to the 
embedding  $H_{\alpha}^1 \subset L^{\infty}$ in $\mathbb{R}$

\begin{lemma}\label{l3}
We have the Sobolev embedding inequality:
\[\|u\|_{L^{\infty}} \leq \frac{1}{\sqrt{2\alpha}}\|u\|_{H_{\alpha}^1}.\]
The equality can be obtained. For example, we take $u= ce^{-\frac{|x-y|}{\alpha}}$ for some $c,y \in \mathbb{R}$.
\end{lemma}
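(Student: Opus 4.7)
The plan is to reduce the statement to a pointwise bound on $u(x)^2$ obtained from the fundamental theorem of calculus, combined with a carefully weighted Young inequality that produces exactly the $\alpha$-dependent combination appearing in $\|\cdot\|_{H^1_\alpha}$.

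First I would note that, since $u\in H^1_\alpha(\R)$, the function $u^2$ is absolutely continuous and vanishes at $\pm\infty$. Hence for every $x\in\R$,
\[
u(x)^2 = 2\int_{-\infty}^{x} u(y)u'(y)\,dy = -2\int_{x}^{+\infty} u(y)u'(y)\,dy.
\]
Averaging the two expressions gives
\[
u(x)^2 = \int_{-\infty}^{x} u u'\,dy - \int_{x}^{+\infty} u u'\,dy,
\]
so that $u(x)^2 \le \int_\R |u(y) u'(y)|\,dy$. This symmetric splitting is what gives the sharp constant; a one-sided bound would lose a factor of $2$.

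Next I would apply the weighted Young inequality $2|ab|\le \tfrac{1}{\alpha}a^2+\alpha b^2$ to $a=u$, $b=u'$, which is designed precisely so that the right-hand side, after multiplication by $\alpha$, equals $u^2+\alpha^2 u'^2$. This yields
\[
u(x)^2 \le \int_\R \Bigl(\tfrac{1}{2\alpha}u^2+\tfrac{\alpha}{2}u'^2\Bigr)dy = \tfrac{1}{2\alpha}\int_\R (u^2+\alpha^2 u_x^2)\,dy = \tfrac{1}{2\alpha}\|u\|_{H^1_\alpha}^2,
\]
and taking the supremum over $x\in\R$ gives the claimed inequality. There is no real obstacle here; the only care required is to choose the right weight in Young's inequality so that the two terms combine with the correct $\alpha^2$ coefficient.

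Finally, for the equality case, I would plug in $u(x)=c\,e^{-|x-y|/\alpha}$: a direct computation gives $u^2+\alpha^2 u_x^2 = 2c^2 e^{-2|x-y|/\alpha}$, whose integral over $\R$ equals $2\alpha c^2$, so $\|u\|_{H^1_\alpha}=|c|\sqrt{2\alpha}$, while $\|u\|_{L^\infty}=|c|$. Thus both sides of the inequality coincide, confirming sharpness. The deeper reason why this profile is extremal is that $u'=-\mathrm{sign}(x-y)\,u/\alpha$, which turns Young's inequality into an equality, and the supremum in the pointwise step is attained at $x=y$.
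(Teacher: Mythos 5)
Your proof is correct and follows essentially the same route as the paper: the symmetric splitting $u(x)^2=\int_{-\infty}^{x}uu'\,dy-\int_{x}^{+\infty}uu'\,dy$ followed by the weighted Young inequality $2|uu'|\le\frac{1}{\alpha}u^2+\alpha u'^2$ is exactly the paper's argument. Your explicit verification of the equality case for $u=ce^{-|x-y|/\alpha}$ is a small addition the paper only asserts, but it does not change the approach.
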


\begin{proof}
For any $y \in \mathbb{R}$, we have:
\begin{eqnarray}
(u)^2(y) &=& \int_{-\infty}^{y}uu_xdx-\int_{y}^{+\infty}uu_xdx \nonumber \\
&\leq& \frac{1}{2\alpha}(\int_{-\infty}^{y}(u^2+\alpha^2u_x^2)dx + \int_{y}^{+\infty}(u^2+\alpha^2u_x^2)dx) \nonumber \\
&=& \frac{1}{2\alpha}\|u\|_{H_{\alpha}^1}^2.\nonumber \\
\end{eqnarray}
So, we obtain:
\[\|u\|_{L^{\infty}} \leq \frac{1}{\sqrt{2\alpha}}\|u\|_{H_{\alpha}^1}.\]
\end{proof}

\section{The local-in-space criterion for blow-up of the DGH equation}

Recall that, by definition, $(1-\alpha^2\partial_x^2)Qf = f$ for any $f \in L^2(\mathbb{R})$ 
so that $p*f-f = \alpha^2\partial_x^2(p*f)$. Taking the derivative with respect to $x$ in (\ref{s5}) yields:

$$\begin{cases}  
u_{tx}+(u+\lambda)u_{xx} = \frac{-u_x^2}{2}+\frac{u^2+2ku}{\alpha^2}-\frac{1}{\alpha^2}p*(\frac{\alpha^2}{2}u_x^2+u^2 + 2ku) \\ 
u(x,0) = u_0(x).
\end{cases} $$
For any $0 < T < T^*$, we see that $u$ and $u_x$ are continuous on $[0,T) \times \mathbb{R}$, and $u(t,x)$ is Lipschitz, uniformly respected to $t$. 
So, the flow map $q(t,x)$ introduced in (\ref{s6}):
\begin{equation}\label{flowl}\begin{cases}  
q_t(t,x) = u(t,q(t,x)) + \lambda \quad \quad \quad \quad t \in [0,T^*) \\ 
q(0,x) = x ,\\
\end{cases}\end{equation}
is indeed well defined in the interval $[0,T)$ and $q \in C^1([0,T)\times \mathbb{R}, \mathbb{R})$.

We have 
\begin{eqnarray}
\frac{d}{dt}[u_x(t,q(t,x))] &=&[u_{tx} + u_{xx}(u+\lambda)](t,q(t,x))  \nonumber\\
&= & \frac{-u_x^2}{2}+\frac{u^2+2ku}{\alpha^2}-\frac{1}{\alpha^2}p*(\frac{\alpha^2}{2}u_x^2+u^2 + 2ku) \nonumber \\
&= & \frac{-u_x^2}{2}+\frac{(u+k)^2}{\alpha^2}-\frac{1}{\alpha^2}p*(\frac{\alpha^2}{2}u_x^2+(u+k)^2), \nonumber 
\end{eqnarray}
where we used the fact that $\int_{\mathbb{R}}^{}p(t,x) =1$. 
By lemma (\ref{l2}),  $\frac{1}{\alpha^2}p*(\frac{\alpha^2}{2}u_x^2+(u+k)^2) \geq \frac{(u+k)^2}{2\alpha^2}$. \
Hence,
\begin{eqnarray}
\frac{d}{dt}[u_x(t,q(t,x))] &\leq & (-\frac{1}{2}u_x^2 +\frac{(u+k)^2}{\alpha^2} - \frac{(u+k)^2}{2\alpha^2})(t,q(t,x)) \nonumber\\
&=& (-\frac{1}{2}u_x^2+ \frac{1}{2\alpha^2}(u+k)^2)(t,q(t,x)) \nonumber 
\end{eqnarray}

Inspired by~\cite{brandolese}, we now introduce 
\[A(t,x) = e^{\frac{q(t,x)}{\alpha}+\frac{(k-\lambda)t}{\alpha}}(\frac{1}{\alpha}(u+k)-u_x)(t,q(t,x))\]
and 
\[B(t,x) = e^{-\frac{q(t,x)}{\alpha}+\frac{(-k+\lambda)t}{\alpha}}(\frac{1}{\alpha}(u+k)+u_x)(t,q(t,x)).\]
Then,
\[\frac{d}{dt}[u_x(t,q(t,x))]  \leq\frac{1}{2}(AB)(t,q(t,x)) \]

The following result plays an important role:
\begin{lemma}
For all $x \in \mathbb{R}$,  the map $A(t,x)$ is monotonically increasing and $B(t,x)$
are monotonically decreasing with respect to $t\in [0,T]$.
\end{lemma}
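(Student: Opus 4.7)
The plan is to differentiate $A$ and $B$ along the flow and show, using the estimates of Lemma~\ref{l1}, that $\partial_t A\ge 0$ and $\partial_t B\le 0$ pointwise. Concretely, I will first recompute the material derivative of $u$ and $u_x$ along the characteristic $q(t,x)$. Using equation~\eqref{s5} and the flow equation~\eqref{flowl},
\[
\tfrac{d}{dt}u(t,q(t,x))=u_t+(u+\lambda)u_x=-\partial_x p*\Bigl(\tfrac{\alpha^2}{2}u_x^2+u^2+2ku\Bigr),
\]
and the computation preceding the lemma already gives
\[
\tfrac{d}{dt}u_x(t,q(t,x))=-\tfrac{u_x^2}{2}+\tfrac{(u+k)^2}{\alpha^2}-\tfrac{1}{\alpha^2}p*\Bigl(\tfrac{\alpha^2}{2}u_x^2+(u+k)^2\Bigr).
\]
(Here I use $p*1=1$ so that $p*(\tfrac{\alpha^2}{2}u_x^2+u^2+2ku)=p*(\tfrac{\alpha^2}{2}u_x^2+(u+k)^2)-k^2$, and $\partial_x p*1=0$.)

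Next I would differentiate $A$. Using $q_t=u+\lambda$, the exponential contributes $A\cdot\tfrac{1}{\alpha}(q_t+k-\lambda)=A\cdot\tfrac{u+k}{\alpha}$, and after substituting $A=e^{(\cdot)}(\tfrac{u+k}{\alpha}-u_x)$ a direct expansion cancels the $\pm(u+k)^2/\alpha^2$ terms and yields
\[
\partial_t A=e^{q/\alpha+(k-\lambda)t/\alpha}\Bigl[\tfrac{u_x^2}{2}-\tfrac{u_x(u+k)}{\alpha}+\tfrac{1}{\alpha^2}p*\bigl(\tfrac{\alpha^2}{2}u_x^2+(u+k)^2\bigr)-\tfrac{1}{\alpha}\partial_x p*\bigl(\tfrac{\alpha^2}{2}u_x^2+(u+k)^2\bigr)\Bigr].
\]
Inequality~\eqref{s3} of Lemma~\ref{l1}, rewritten as
\[
\tfrac{1}{\alpha^2}(p-\alpha\partial_x p)*\bigl(\tfrac{\alpha^2}{2}u_x^2+(u+k)^2\bigr)\ge \tfrac{(u+k)^2}{2\alpha^2},
\]
then gives
\[
\partial_t A\ge e^{q/\alpha+(k-\lambda)t/\alpha}\cdot\tfrac{1}{2}\Bigl(\tfrac{u+k}{\alpha}-u_x\Bigr)^2\ge 0,
\]
so $A(\cdot,x)$ is nondecreasing.

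The argument for $B$ is symmetric: differentiating and simplifying produces the same structure but with $+u_x$ and a $+\alpha\partial_x p$ convolution, so I invoke inequality~\eqref{s4} instead of~\eqref{s3} and obtain
\[
\partial_t B\le -e^{-q/\alpha+(-k+\lambda)t/\alpha}\cdot\tfrac{1}{2}\Bigl(\tfrac{u+k}{\alpha}+u_x\Bigr)^2\le 0.
\]
The only real obstacle is bookkeeping—keeping track of the $k$, $\lambda$ constants so that the exponential derivatives combine with $q_t$ to leave exactly $\tfrac{u+k}{\alpha}$, and recognizing that after using Lemma~\ref{l1} the remaining expression is a perfect square. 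Everything else is chain rule, substitution of~\eqref{s5}, and the pointwise convolution estimates already established.
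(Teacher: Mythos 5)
Your proof is correct and follows essentially the same route as the paper: differentiate $A$ and $B$ along the characteristics, observe that the exponential factor contributes exactly $\pm\frac{u+k}{\alpha}A$ (resp.\ $B$) so the $(u+k)^2/\alpha^2$ terms cancel, apply the two convolution estimates of Lemma~\ref{l1}, and recognize the remainder as $\pm\frac{1}{2}\bigl(\frac{u+k}{\alpha}\mp u_x\bigr)^2$. The only cosmetic difference is that you absorb the $k^2$ constants into the convolutions up front (using $p*1=1$, $\partial_xp*1=0$), whereas the paper carries an explicit $k^2/\alpha^2$ term and invokes Lemma~\ref{l1} in its original form; the two bookkeepings are equivalent.
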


\begin{proof}
Let us  calculate $\frac{d}{dt}A(t,x)$:
\begin{eqnarray}
\frac{d}{dt}A(t,x) &=& e^{\frac{q(t,x)}{\alpha}+\frac{(k-\lambda)t}{\alpha}}(\frac{1}{\alpha}(u+\lambda)(\frac{1}{\alpha}(u+k) - u_x) -\frac{k-\lambda}{\alpha}(\frac{1}{\alpha}(u+k) - u_x) \nonumber \\
&& \qquad\qquad \qquad+ (\frac{1}{\alpha}(u+k) -u_x)_t +(u+\lambda)(\frac{1}{\alpha}(u+k) - u_x)_x) \nonumber \\
&=& e^{\frac{q(t,x)}{\alpha}+\frac{(k-\lambda)t}{\alpha}}(\frac{u+k}{\alpha}(\frac{1}{\alpha}(u+k) - u_x)+ \frac{1}{\alpha}(u_t + (u+\lambda)u_x) - (u_{xt} + (u+\lambda)u_{xx})) \nonumber \\
&=& e^{\frac{q(t,x)}{\alpha}+\frac{(k-\lambda)t}{\alpha}}(\frac{u+k}{\alpha}(\frac{1}{\alpha}(u+k) - u_x) - \frac{1}{\alpha}\partial_xp*(\frac{\alpha^2}{2}u_x^2+u^2+2ku) +\frac{u_x^2}{2}-\frac{u^2+2ku}{\alpha^2} \nonumber \\
&& \qquad\qquad + \frac{1}{\alpha^2}p*(\frac{\alpha^2}{2}u_x^2+u^2+2ku)) \nonumber \\
&=& e^{\frac{q(t,x)}{\alpha}+\frac{(k-\lambda)t}{\alpha}}(\frac{u_x^2}{2}-\frac{(u+k)u_x}{\alpha}+ \frac{k^2}{\alpha^2}+ \frac{1}{\alpha^2} (p-\alpha \partial_xp)*(\frac{\alpha^2}{2}u_x^2+u^2+2ku).\nonumber 
\end{eqnarray}
By lemma (\ref{l1}),  $(p-\alpha \partial_xp)*(\frac{\alpha^2}{2}u_x^2+u^2+2ku)  \geq \frac{(u+k)^2}{2}-k^2$. 
Then we deduce that 
\begin{eqnarray}
\frac{d}{dt}A(t,x) &\geq& e^{\frac{q(t,x)}{\alpha}+\frac{(k-\lambda)t}{\alpha}}(\frac{u_x^2}{2}-\frac{(u+k)u_x}{\alpha}+ \frac{k^2}{\alpha^2} + \frac{1}{\alpha^2}(\frac{(u+k)^2}{2}-k^2)) \nonumber \\
&=&  e^{\frac{q(t,x)}{\alpha}+\frac{(k-\lambda)t}{\alpha}}(\frac{u_x^2}{2}-\frac{(u+k)u_x}{\alpha}+\frac{(u+k)^2}{2\alpha^2})  \nonumber \\
\end{eqnarray}
So, for all $x$, $t \mapsto A(t,x)$ is monotonically increasing. 
Similarly
\begin{eqnarray}
\frac{d}{dt}B(t,x) &=& e^{-\frac{q(t,x)}{\alpha}+\frac{(-k+\lambda)t}{\alpha}}(-\frac{1}{\alpha}(u+\lambda)(\frac{1}{\alpha}(u+k) + u_x)+ \frac{-k+\lambda}{\alpha}(\frac{1}{\alpha}(u+k) + u_x)  \nonumber \\
&& \qquad\qquad \qquad+ (\frac{1}{\alpha}(u+k) + u_x)_t+(u+\lambda)(\frac{1}{\alpha}(u+k) + u_x)_x) \nonumber \\
&=&e^{-\frac{q(t,x)}{\alpha}+\frac{(-k+\lambda)t}{\alpha}}(-\frac{u+k}{\alpha}(\frac{1}{\alpha}(u+k) + u_x)+ \frac{1}{\alpha}(u_t + (u+\lambda)u_x) + (u_{xt} + (u+\lambda)u_{xx})) \nonumber \\
&=& e^{-\frac{q(t,x)}{\alpha}+\frac{(-k+\lambda)t}{\alpha}}(-\frac{u+k}{\alpha}(\frac{1}{\alpha}(u+k) + u_x) - \frac{1}{\alpha}\partial_xp*(\frac{\alpha^2}{2}u_x^2+u^2+2ku) -\frac{u_x^2}{2}+\frac{u^2+2ku}{\alpha^2} \nonumber \\
&& \qquad\qquad- \frac{1}{\alpha^2}p*(\frac{\alpha^2}{2}u_x^2+u^2+2ku)) \nonumber \\
&=& -e^{-\frac{q(t,x)}{\alpha}+\frac{(-k+\lambda)t}{\alpha}}(\frac{u_x^2}{2}+\frac{(u+k)u_x}{\alpha}+ \frac{k^2}{\alpha^2}+\frac{1}{\alpha^2}(p+\alpha \partial_xp)*(\frac{\alpha^2}{2}u_x^2+u^2+2ku)) .\nonumber 
\end{eqnarray}

Applying now the second estimate of  Lemma (\ref{l1}) we obtain:
\begin{eqnarray}
\frac{d}{dt}B(t,x) &\leq& -e^{-\frac{q(t,x)}{\alpha}+\frac{(-k+\lambda)t}{\alpha}}(\frac{u_x^2}{2}+\frac{(u+k)u_x}{\alpha}+ \frac{k^2}{\alpha^2}+\frac{(u+k)^2}{2}-\frac{k^2}{\alpha^2}) \nonumber \\
&=& -e^{-\frac{q(t,x)}{\alpha}+\frac{(-k+\lambda)t}{\alpha}}(\frac{u_x^2}{2}+\frac{(u+k)u_x}{\alpha}+\frac{(u+k)^2}{2\alpha^2})  \nonumber \\
&\leq& 0 \nonumber
\end{eqnarray}
So, for all $x$, $t \mapsto B(t,x)$ is monotonically decreasing
\end{proof}

Now, we are ready to prove the main theorem.

\begin{proof}[Proof of~Theorem~\ref{theo:main}]

Let $x_0$ be such that $u'_0(x_0) < -\frac{1}{\alpha}|u_0(x_0)+\frac{1}{2}(c_0+\frac{\gamma}{\alpha^2})| = - \frac{1}{\alpha}|u_0(x_0)+k| $. 
We denote 
\[g(t) = u_x(t,q(t,x_0)),\] 
$A(t) = A(t,x_0)$ and $B(t) = B(t,x_0)$. For all $t \in [0,T^*)$, we have:
$\frac{d}{dt}A(t) \geq 0$ and $\frac{d}{dt}B(t)  \leq 0$.

So, $A(t) \geq A(0) = e^{\frac{x_0}{\alpha}}(\frac{1}{\alpha}(u_0(x_0)+k)-u_0'(x_0)) > 0$, and $B(t) \leq B(0) = e^{\frac{-x_0}{\alpha}}(\frac{1}{\alpha}(u_0(x_0)+k)+u_0'(x_0)) < 0$.
Thus $AB(t) \leq AB(0) < 0$. Then for all $t \in [0,T)$: $g'(t) < 0$.  \\

Assume by contradiction $T^* = \infty$, then $g(t) \leq g(0) - \alpha_0t$ where $\alpha_0 = \frac{1}{2}(u'(0)^2-\frac{1}{\alpha^2}(u_0+k)^2)(x_0)$.
We choose $t_0$ such that $g(0) - \alpha_0t_0 \leq 0$ and $(g(0)-\alpha_0t_0)^2 \geq \frac{1}{\alpha^3}(\|u_0\|_{H_{\alpha}^1}+k\sqrt{2\alpha})^2 $. For $t \geq t_0$, we have: 
\begin{eqnarray}
g'(t) &\leq& \frac{1}{2}(\frac{1}{\alpha^2}(u+k)^2 - u_x^2)(t,q(t, x_0)) \nonumber \\
&\leq& \frac{1}{2}(\frac{1}{\alpha^2}(\|u(t,.)\|_{L^{\infty}}+k)^2 - g(t)^2).\nonumber 
\end{eqnarray}

Using Sobolev embedding inequality in lemma (\ref{l3}) and the energy conservation identity, one has:
\begin{eqnarray}
g'(t) &\leq&  \frac{1}{2}(\frac{1}{\alpha^2}(\frac{\|u_0\|_{H_{\alpha}^1}}{\sqrt{2\alpha}}+k)^2 -g(t)^2)\nonumber \\
&\leq& -\frac{1}{4}g(t)^2 \nonumber 
\end{eqnarray}
for all $t \in (t_0, \infty)$. Dividing both sides by $g^2(t)$ and intergrating, we get 
\[\frac{1}{g(t_0)} - \frac{1}{g(t)}+\frac{1}{4}(t-t_0) \leq 0 \quad \quad t \geq t_0.\]
This is a contradiction since $- \frac{1}{g(t)} > 0$ and $\frac{1}{4}(t-t_0) \rightarrow \infty$ as $t \rightarrow \infty$. 
Thus $u_x(t,q(t,x_0)) $ blow up in finite time and $T^* \leq t_0 + \frac{4}{|g(t_0)|} < \infty$.

\end{proof}

\subsection*{Remark} Local-in-space blowup criterion in the particular case  $\gamma= c_0 =0$ and $\alpha=1$  (corresponding to  the Camassa-Holm equation) has been first built in \cite{brandolese} and later extended in~\cite{brandolese-cortez1} to a class of possibly non-quadratic nonlinearities. See also \cite{brandolese-cortez2} for improvements specific to the periodic case.
Our Theorem~\ref{theo:main} improves the result of~\cite{brandolese} in a different direction, by extending the blowup result to arbitrary values of  $\gamma$, $c_0$ and $\alpha>0$.

\section{Blow-up for two-component DGH system}
When $\gamma = 0$, the equation \ref{s8} becomes
\begin{equation}\label{s10}
\begin{cases} 
u_t - \alpha^2u_{txx} + c_0u_x + 3uu_x = \alpha^2(2u_xu_{xx}+uu_{xxx}) - \tilde{\rho}\tilde{\rho_x}-\tilde{\rho_x}, & t > 0, \quad x\in \mathbb{R}, \\ 
\tilde{\rho_t}+(u\tilde{\rho})_x+u_x = 0 & t > 0, \quad x\in \mathbb{R}, \\
\tilde{\rho}(0,x) = \tilde{\rho_0}, \\
u(0,x) = u_0.
\end{cases}
\end{equation}
This can be rewritten as
\begin{equation}\label{s11}
\begin{cases}  
u_t + uu_{x}= -\partial_xp*(\frac{\alpha^2}{2}u_x^2+u^2 + c_0u+\frac{1}{2}\tilde{\rho}^2+\tilde{\rho}) , & t > 0, \quad x\in \mathbb{R} \\ 
\tilde{\rho_t}+ u\tilde{\rho}_x  = -u_x\tilde{\rho}- u_x , & t > 0, \quad x\in \mathbb{R} \\
u(x,0) = u_0(x), & x\in \mathbb{R} \\
\tilde{\rho}(0,x) = \tilde{\rho_0}(x) & x\in \mathbb{R}
\end{cases} 
\end{equation}
Here we give the proof for theorem~\ref{theo:main2}
\begin{proof}

Again, using the identity $p*f-f = \alpha^2\partial_x^2(p*f)$, we take the derivative with respect to~$x$ in (\ref{s9}) yields:
\[u_{tx}+uu_{xx} =  \frac{-u_x^2}{2}+\frac{u^2+c_0u}{\alpha^2}+\frac{1}{\alpha^2}(\frac{\tilde{\rho}^2}{2}+\tilde{\rho})-\frac{1}{\alpha^2}p*(\frac{\alpha^2}{2}u_x^2+u^2 + c_0u+\frac{\tilde{\rho}^2}{2}+\tilde{\rho})\]

As before, we make use of the flow map, defined as in~\eqref{flowl}. When $\gamma = 0$, the map becomes
\begin{equation}\label{flowl2}
\begin{cases}  
q_t(t,x) = u(t,q(t,x)) \quad \quad \quad \quad t \in [0,T^*) \\ 
q(0,x) = x.\\
\end{cases}
\end{equation}
Notice that $q \in C^1([0,T)\times \mathbb{R}, \mathbb{R})$.  
We have
\begin{eqnarray}
\frac{d}{dt}[u_x(t,q(t,x))] &=&[u_{tx} + uu_{xx}](t,q(t,x))  \nonumber\\
&= & \frac{-u_x^2}{2}+\frac{u^2+c_0u}{\alpha^2}+\frac{1}{\alpha^2}(\frac{\tilde{\rho}^2}{2}+\tilde{\rho})-\frac{1}{\alpha^2}p*(\frac{\alpha^2}{2}u_x^2+u^2 + c_0u+\frac{\tilde{\rho}^2}{2}+\tilde{\rho}) \nonumber \\
&= & \frac{-u_x^2}{2}+\frac{(u+\frac{c_0}{2})^2}{\alpha^2}+\frac{1}{2\alpha^2}(\tilde{\rho}+1)^2-\frac{1}{\alpha^2}p*(\frac{\alpha^2}{2}u_x^2+(u+\frac{c_0}{2})^2+(\tilde{\rho}+1)^2). \nonumber 
\end{eqnarray}
Applying Lemma~\ref{l2}:  $\frac{1}{\alpha^2}p*(\frac{\alpha^2}{2}u_x^2+(u+\frac{c_0}{2})^2) \geq \frac{(u+\frac{c_0}{2})^2}{2\alpha^2}$, and the obvious estimate $p*(\tilde{\rho}+1)^2 \geq 0$, we get
\begin{eqnarray}
\frac{d}{dt}[u_x(t,q(t,x))] &\leq & (-\frac{1}{2}u_x^2 +\frac{(u+\frac{c_0}{2})^2}{\alpha^2}+\frac{1}{2\alpha^2}(\tilde{\rho}+1)^2 - \frac{(u+\frac{c_0}{2})^2}{2\alpha^2})(t,q(t,x)) \nonumber\\
&=& (-\frac{1}{2}u_x^2+ \frac{1}{2\alpha^2}(u+\frac{c_0}{2})^2+\frac{1}{2\alpha^2}(\tilde{\rho}+1)^2)(t,q(t,x)). \nonumber 
\end{eqnarray}

We also have the following identity:
\[
\begin{split}
\frac{d}{dt}[(\tilde{\rho}&(t,q(t,x))+1)q_x(t,x)]  \\
&=  (\tilde{\rho_t}(t,q(t,x))+\tilde{\rho_x}(t,q(t,x))q_t(t,x))q_x(t,x)+(\tilde{\rho}(t,q(t,x))+1)q_{xt}(t,x) \\
&= \Bigl(\tilde{\rho_t}(t,q(t,x))+\tilde{\rho_x}(t,q(t,x))u(t,q(t,x))+\tilde{\rho}(t,q(t,x))u_x(t,q(t,x))+u_x(t,q(t,x))\Bigr)q_x(t,x) \\
&= 0.
\end{split}
\]

This implies that $(\tilde{\rho}(t,q(t,x))+1)q_x(t,x) = (\tilde{\rho_0}(x)+1) $.
The initial condition implies $\tilde{\rho_0}(x_0)+1 = 0$,
then $\tilde{\rho}(t,q(t,x_0))+1=0$  for all $t$. Therefore, \\
\begin{eqnarray}
\frac{d}{dt}[u_x(t,q(t,x_0))] &\leq&  (-\frac{1}{2}u_x^2+ \frac{1}{2\alpha^2}(u+\frac{c_0}{2})^2)(t,q(t,x_0))  \nonumber
\end{eqnarray}

Using now similar calculations as for the DHG equation, we factorize  
$(-\frac{1}{2}u_x^2+ \frac{1}{2\alpha^2}(u+\frac{c_0}{2})^2)(t,q(t,x_0)) = \frac{1}{2}(AB)(t,q(t,x_0))$
where $A(t,x_0) = e^{\frac{q(t,x_0)}{\alpha}+\frac{c_0t}{2\alpha}}(\frac{1}{\alpha}(u+\frac{c_0}{2})-u_x)(t,q(t,x_0))$ and $B(t,x_0) = e^{-\frac{q(t,x_0)}{\alpha}-\frac{c_0t}{2\alpha}}(\frac{1}{\alpha}(u+\frac{c_0}{2})+u_x)(t,q(t,x_0))$. 
Using Lemma~4.2, we see that  $A(t,x_0)$ is monotically increasing and $B(t,x_0)$
monotonically decreasing with respect to~$t$.

But at $x_0$ we have, by our assumption, $u'_0(x_0) < -\frac{1}{\alpha}|u_0(x_0)+\frac{c_0}{2}| $. 
We denote $g(t) = u_x(t,q(t,x_0))$, $A(t) = A(t,x_0)$ and $B(t) = B(t,x_0)$. For all $t \in [0,T^*)$, we have:

\[\frac{d}{dt}A(t) \geq 0 \]
and 
\[\frac{d}{dt}B(t)  \leq 0  \]

So, $A(t) \geq A(0) = e^{\frac{x_0}{\alpha}}(\frac{1}{\alpha}(u_0(x_0)+\frac{c_0}{2})-u_0'(x_0)) > 0$, and $B(t) \leq B(0) = e^{\frac{-x_0}{\alpha}}(\frac{1}{\alpha}(u_0(x_0)+\frac{c_0}{2})+u_0'(x_0)) < 0$.
Thus $AB(t) \leq AB(0) < 0$. Then for all $t \in [0,T)$: $g'(t) < 0$.  \\

Assume by contradiction $T^* = \infty$, then $g(t) \leq g(0) - \alpha_0t$ where $\alpha_0 = \frac{1}{2}(u'(0)^2-\frac{1}{\alpha}(u_0+\frac{c_0}{2})^2)(x_0)$.
We choose $t_0$ such that $g(0) - \alpha_0t_0 \leq 0$ and $(g(0)-\alpha_0t_0)^2 \geq \frac{1}{\alpha^3}(\|u_0\|_{H_{\alpha}^1}+\|\tilde{\rho}_0\|_{L^2}+\frac{c_0}{2}\sqrt{2\alpha})^2 $. For $t \geq t_0$, we have: 

\begin{eqnarray}
g'(t) &\leq& \frac{1}{2}(\frac{1}{\alpha^2}(u+\frac{c_0}{2})^2 - u_x^2)(t,q(t, x_0)) \nonumber \\
&\leq& \frac{1}{2}(\frac{1}{\alpha^2}(\|u(t,.)\|_{L^{\infty}}+\frac{c_0}{2})^2 - g(t)^2)\nonumber 
\end{eqnarray}

Using Sobolev embedding inequality in lemma (\ref{l3}) and the energy conservation identity, one has:
\begin{eqnarray}
g'(t) &\leq&  \frac{1}{2}(\frac{1}{\alpha^2}(\frac{\|u\|_{H_{\alpha}^1}}{\sqrt{2\alpha}}+\frac{c_0}{2})^2 -g(t)^2)\nonumber \\
&\leq&  \frac{1}{2}(\frac{1}{\alpha^2}(\frac{\|u\|_{H_{\alpha}^1}}{\sqrt{2\alpha}}+\frac{\|\tilde{\rho}\|_{L^2}}{\sqrt{2\alpha}}+\frac{c_0}{2})^2 -g(t)^2)\nonumber \\
&=&  \frac{1}{2}(\frac{1}{\alpha^2}(\frac{\|u_0\|_{H_{\alpha}^1}}{\sqrt{2\alpha}}+\frac{\|\tilde{\rho}_0\|_{L^2}}{\sqrt{2\alpha}}+\frac{c_0}{2})^2 -g(t)^2)\nonumber \\
&\leq& -\frac{1}{4}g(t)^2 \nonumber 
\end{eqnarray}

for all $t \in (t_0, \infty)$. Dividing both sides by $g^2(t)$ and intergrating, we get 
\[\frac{1}{g(t_0)} - \frac{1}{g(t)}+\frac{1}{4}(t-t_0) \leq 0 \quad \quad t \geq t_0\]
This is a contradiction since $- \frac{1}{g(t)} > 0$ and $\frac{1}{4}(t-t_0) \rightarrow \infty$ as $t \rightarrow \infty$. 
Thus $u_x(t,q(t,x_0)) $ blow up in finite time and $T^* \leq t_0 + \frac{4}{|g(t_0)|} < \infty$

\end{proof}

\maketitle

\section*{Appendix}
We give another proof for theorem (\ref{theo:main}). A similar technique can be applied to Theorem~\ref{theo:main2}.

\begin{proof}
Using the argument: 
\[\frac{d}{dt}[u_x(t,q(t,x))] \leq  (-\frac{1}{2}u_x^2+ \frac{1}{2\alpha^2}(u+k)^2)(t,q(t,x)) \]

We factorize  $(-\frac{1}{2}u_x^2+ \frac{1}{2\alpha^2}(u+k)^2)(t,q(t,x)) = \frac{1}{2}(AB)(t,q(t,x))$ where $ A(t,x) = \frac{1}{\alpha}(u+k) - u_x$ and 
$B(t,x) = \frac{1}{\alpha}(u+k) + u_x $. So, it follows that:
\[\frac{d}{dt}[u_x(t,q(t,x))] \leq \frac{1}{2}(AB)(t,x)\]

We have:

\begin{eqnarray}
\frac{d}{dt}A(t,x) &=& \frac{d}{dt}(\frac{u+k}{\alpha}-u_x)(t,q(t,x)) \nonumber \\
&=& (\frac{u_t}{\alpha}-u_{xt})+(\frac{u_x}{\alpha}-u_{xx})q_t(t,x) \nonumber \\
&=& (\frac{u_t}{\alpha}-u_{xt})+(\frac{u_x}{\alpha}-u_{xx})(u+\lambda) \nonumber \\
&=& \frac{1}{\alpha}(u_t+(u+\lambda)u_x)-(u_{tx}+(u+\lambda)u_{xx}) \nonumber \\
&=& \frac{u_x^2}{2}-\frac{u^2+2ku}{\alpha^2}+\frac{1}{\alpha^2} (p-\alpha \partial_xp)*(\frac{\alpha^2}{2}u_x^2+u^2+2ku)  \nonumber 
\end{eqnarray}

By lemma (\ref{l1}):  $(p-\alpha \partial_xp)*(\frac{\alpha^2}{2}u_x^2+u^2+2ku)  \geq \frac{(u+k)^2}{2}-k^2$, then we have:

\begin{eqnarray}
\frac{d}{dt}A(t,x) &\geq& \frac{u_x^2}{2}-\frac{u^2+2ku}{\alpha^2} +\frac{1}{\alpha^2}(\frac{(u+k)^2}{2}-k^2) \nonumber \\
&=&\frac{u_x^2}{2}-\frac{u^2+2ku}{\alpha^2} \nonumber \\
&=&-\frac{1}{2}(AB)(t,x) \nonumber 
\end{eqnarray}

Similarly, computing for $B(t,x)$ yields

\begin{eqnarray}
\frac{d}{dt}B(t,x) &=& \frac{d}{dt}(\frac{u+k}{\alpha}+u_x)(t,q(t,x)) \nonumber \\
&=& (\frac{u_t}{\alpha}+u_{xt})+(\frac{u_x}{\alpha}+u_{xx})q_t(t,x) \nonumber \\
&=& (\frac{u_t}{\alpha}+u_{xt})+(\frac{u_x}{\alpha}+u_{xx})(u+\lambda) \nonumber \\
&=& \frac{1}{\alpha}(u_t+(u+\lambda)u_x)+(u_{tx}+(u+\lambda)u_{xx}) \nonumber \\
&=& -\frac{u_x^2}{2}+\frac{u^2+2ku}{\alpha^2}-\frac{1}{\alpha^2} (p+\alpha \partial_xp)*(\frac{\alpha^2}{2}u_x^2+u^2+2ku)  \nonumber 
\end{eqnarray}

By lemma (\ref{l1}):  $(p+\alpha \partial_xp)*(\frac{\alpha^2}{2}u_x^2+u^2+2ku)  \geq \frac{(u+k)^2}{2}-k^2$, then we have:

\begin{eqnarray}
\frac{d}{dt}B(t,x) &\leq& -\frac{u_x^2}{2}+\frac{u^2+2ku}{\alpha^2} -\frac{1}{\alpha^2}(\frac{(u+k)^2}{2}-k^2) \nonumber \\
&=&-\frac{u_x^2}{2}+\frac{u^2+2ku}{\alpha^2} \nonumber \\
&=&\frac{1}{2}(AB)(t,x) \nonumber 
\end{eqnarray}

The initial condition $u'_0(x_0) < -\textstyle\frac{1}{\alpha}\bigl|u_0(x_0)+\textstyle\frac{1}{2}(c_0+\textstyle\frac{\gamma}{\alpha^2})\bigr|$
is equivalent to $A(0,x_0) > 0$ and $B(0,x_0) < 0$. Let:

\begin{eqnarray}
\omega = \sup\{t \in [0,T^*): A(.,x_0) > 0 \quad \mbox{and} \quad B(.,x_0) < 0  \quad \mbox{on} \quad  [0,t] \} \nonumber 
\end{eqnarray}

Then $\omega > 0$. If $\omega < T^*$ then at least one of the inequalies $A(\omega,x_0) \leq 0$ and $B(\omega,x_0) \geq 0$ 
must be true. This is a contradiction
with the fact that $AB(.,x_0) < 0$ on the interval $[0,\omega ]$, then $A(\omega,x_0) \geq A(0,x_0) >0$ and $B(\omega, x_0) \leq B(0,x_0) < 0$.
Hence, $\omega = T^*$ . \\

To conclude the proof, we argue as in~\cite{brandolese-cortez2}, considering 
\[h(t) = \sqrt{-(AB)(t,x_0)}\]
Then the time derivative of $h$

\begin{eqnarray}
\frac{d}{dt}h(t) &=& -\frac{A_tB+AB_t}{2\sqrt{-AB}}(t,x_0) \nonumber \\
&\geq& \frac{(-AB)(A-B)}{4\sqrt{-AB}}(t,x_0)\nonumber 
\end{eqnarray}

By the geometric-arithmetic mean inequality $(A-B)(t,x_0) \geq 2\sqrt{-(AB)(t,x_0)} = 2h(t)$, it follows

\[\frac{d}{dt}h(t) \geq \frac{1}{2}h^2(t)\]

But $h(0)= \sqrt{-(AB)(0,x_0)} > 0$. Hence the solution blows up in finite time and $T^* < \frac{2}{h(0)}$. Or it can be rewrire as:
\[T^* < \frac{2}{\sqrt{u'_0(x_0)^2 - \frac{1}{\alpha^2}(u_0(x_0)+k)^2}}\]

\end{proof}

We conclude observing that we do not know if Theorem~\ref{theo:main2}
 remains valid when $\gamma\not=0$. The main difficulty arises from the fact that
when $\gamma\not=0$, the underlining nonlinear transport equations associated with $u$ and $\rho$ travel with different speed (the two speeds are $u-\gamma/\alpha^2$ and $u$ respectively). This makes difficult to use the characteristics method to derive the ordinary differential system leading to the blowup.


\begin{thebibliography}{00}

\bibitem{brandolese}
L. Brandolese, \textit{Local-in-space criteria for blowup in shallow water and dispersive rod equations},
\textit{Comm. Math. Phys},  330 (2014), 401-414.

\bibitem{brandolese-cortez1}
L. Brandolese, Manuel Fernando Cortez, \textit{Blowup issues for a class of nonlinear dispersive wave equations}
J. Diff. Equ. 256 (2014) 3981-3998.

\bibitem{brandolese-cortez2}
L. Brandolese, Manuel Fernando Cortez, \textit{On permanent and breaking waves in hyperelastic rods and rings},
J. Funct. Anal. 266, 12 (2014) 6954-6987.

\bibitem{con-esc}
A. Constantin and J. Escher, {\it Well-posedness, global existence and blow-up phenomena for a
periodic quasi-linear hyperbolic equation\/}, Comm. Pure Appl. Math., 51 (1998), 475--504.

\bibitem{cons-moli}
A. Constantin and L. Molinet, {\it Global weak solutions for a shallow water equation}, Comm.
Math. Phys., 211 (2000), 45--61.

\bibitem{cons-ivan}
A. Constantin and R. I. Ivanov, “\textit{On an integrable twocomponent
Camassa-Holm shallow water system},” Physics Letters
A, vol. 372, no. 48, pp. 7129–7132, 2008.

\bibitem{camassa-holm}
R. Camassa and D. D. Holm, {\it An integrable shallow water
equation with peaked solitons},  Physical Review Letters, vol. 71,
no. 11, pp. 1661--1664, 1993.

\bibitem{dullin-gottwald-holm}
H. Dullin, G. Gottwald, D. Holm, \textit{An integrable shallow water equation with linear and nonlinear dispersion}, Phys.
Rev. Lett. 87 (2001) 1945--1948.

\bibitem{escher-lech-yin}
J.Escher,O. Lechtenfeld, and Z.Yin, \textit{Well-posedness andblowup
phenomena for the 2-component Camassa-Holm equation},”
Discrete and Continuous Dynamical Systems A, vol. 19, no. 3, pp.
493–513, 2007.

\bibitem{guo-zhou}
Z. Guo and Y. Zhou, “\textit{Wave breaking and persistence properties
for the dispersive rod equation},” SIAMJournal onMathematical
Analysis, vol. 40, no. 6, pp. 2567–2580, 2009.

\bibitem{gou-gao-liu}
F. Guo, H. Gao, and Y. Liu, “\textit{On the wave-breaking phenomena
for the two-component Dullin-Gottwald-Holm system},” Journal
of the London Mathematical Society, vol. 86, no. 3, pp. 810–
834, 2012.

\bibitem{Himo-Misio}
A. A. Himonas and G. Misiołek, “\textit{The Cauchy problem for
an integrable shallow-water equation},” Differential and Integral
Equations, vol. 14, no. 7, pp. 821–831, 2001.

\bibitem{kato}
T. Kato, {\it Quasi-linear equations of evolution, with applications to partial differential equations,} in
Spectral Theory and Differential Equations, vol. 4487 of Lecture Notes in Mathematics, pp. 25--70, Springer,
Berlin, Germany, 1975.

\bibitem{Li-Olver}
Y. Li and P. Olver, {\it Well-posedness and blow-up solutions for an integrable nonlinear disper-
sive model wave equation\/}, J. Diff. Eq., 162 (2000), 27--63.

\bibitem{liu}
Y.Liu, \textit{Global existence and blow-up solutions for a nonlinear shallow water equation.}, Mathematische Annalen
Volume 335, Issue 3 , pp 717-735.

\bibitem{LvZhu}
W Lv, W. Zhu, {\it Wave Breaking for the Modified Two-Component Camassa-Holm System}, Abstract and Applied Analysis, 2014.

\bibitem{mckean}
H.P.Mckean, {\it Breakdown of a shallow water equation}, Asian J.Math. 2(4) (1998) 867-874.

\bibitem{Mis}
G. Misiolek, {\it Classical solutions of the periodic Camassa-Holm equation}, Geom. Funct. Anal.,
12 (2002), 1080--1104.

\bibitem{Shk}
S. Shkoller, {\it Geometry and curvature of diffeomorphism groups with H1 metric and mean
hydrodynamics\/}, J. Funct. Anal., 160 (1998), 337--365.

\bibitem{TiGuLi}
Tian, L.X., Gui, G.,Liu,Y.: \textit{On the Cauchy problem and the scattering problem for the Dullin-Gottwald-Holm equation,} Communications in Mathematical Physics
Volume 257, Issue 3 , pp 667-701 

\bibitem{xin-zhang}
Z. Xin and P. Zhang, {\it On the weak solution to a shallow water equation}, Comm. Pure Appl.
Math., 53 (2000), 1411--1433.

\bibitem{zhai-gou-wang}
P. Zhai, Z. Guo, and W.Wang, ''\textit{Blow-up Phenomena and Persistence Properties of Solutions to the Two-Component DGH Equation},''
Abstract and Applied Analysis Volume 2013 (2013), Article ID 750315 

\bibitem{zhou}
Y. Zhou, {\it Blow-up of solutions to the DGH equation}, Journal of Functional Analysis, vol. 250, no. 1,
pp. 227, 248, 2007.

\bibitem{zhu-xu2}
M. Zhu, J.Xu, \textit{Wave-breaking phenomena and global solutions for periodic two-component Dullin-Gottwald-Holm Systems}, Electronic Journal of Differential Equations, 
Vol. 2013 (2013), No. 44, pp. 1-27.

\bibitem{zhu-xu}
M. Zhu and J. Xu, “\textit{On the wave-breaking phenomena for
the periodic two-component Dullin-Gottwald-Holm system},”
Journal of Mathematical Analysis and Applications, vol. 391, no.
2, pp. 415–428, 2012.

\bibitem{zhu-jin--jiang}
M. Zhu, L. Jin, and Z. Jiang, \textit{A New Blow-Up Criterion for the DGH Equation}, Abstract and Applied Analysis
Volume 2012, Article ID 515948.



\end{thebibliography}
\end{document}